\def\ps@pprintTitle{%
	\let\@oddhead\@empty
	\let\@evenhead\@empty
	\def\@oddfoot{\footnotesize\itshape
		{} \hfill\today}%
	\let\@evenfoot\@oddfoot
}
\newtheorem{theor}{Theorem}
\newtheorem{prop}[theor]{Proposition}
\newtheorem{cor}[theor]{Corollary}
\newtheorem{lemma}[theor]{Lemma}
\theoremstyle{definition} 
\newtheorem{defin}{Definition}
\newtheorem{rem}{Remark}
\newtheorem{ques}{Question}
\newtheorem*{conv}{Convention}
\newtheorem{ex}[theor]{Example}
\newtheorem{exs}[theor]{Examples}
\DeclareMathOperator{\Sym}{Sym}
\DeclareMathOperator{\id}{id}
\DeclareMathOperator{\Aut}{Aut}
\begin{document}

\begin{frontmatter}
	\title{On the indecomposable involutive solutions of the Yang-Baxter equation of finite primitive level 
 %and their displacements groups\tnoteref{mytitlenote}
 }
	\tnotetext[mytitlenote]{The author is a member of GNSAGA (INdAM).}
	\author{Marco CASTELLI}
	\ead{marco.castelli@unisalento.it - marcolmc88@gmail.com}
	%\author[unile]{Francesco~CATINO%\corref{c1}
	%}
	%\ead{francesco.catino@unisalento.it}
	%\author [unile] {Paola~STEFANELLI}
	%\ead{paola.stefanelli@unisalento.it}
%		\cortext[c1]{Corresponding author}
	%\address[unile]{Dipartimento di Matematica e Fisica ``Ennio De Giorgi"
	%	\\
%		Universit\`{a} del Salento\\
%		Via Provinciale Lecce-Arnesano \\
	%	73100 Lecce (Italy)\\}
	%	

\begin{abstract}
In this paper, we study the class of indecomposable involutive solutions of the Yang-Baxter equation of finite primitive level, recently introduced by Ced\'o and Okni\'nski in \cite{cedo2021constructing}. We give a group-theoretic characterization of these solutions by means of displacements groups and we apply this result to compute and enumerate the ones having small size. For some classes of indecomposable involutive solutions recently studied in literature, we compute the exact value of the primitive level. Some relationships with other families of solutions also are discussed. Finally, following \cite[Question 3.2]{cedo2021constructing}, we completely describe the ones having primitive level $2$ by left braces. 
\end{abstract}
\begin{keyword}
\texttt{imprimitive group\sep Yang-Baxter equation\sep brace\sep cycle set }
\MSC[2020] 16T25\sep 81R50 \sep 20E22 \sep 20N02
\end{keyword}

\end{frontmatter}

\section*{Introduction}
The quantum Yang-Baxter equation have been of interest ever since a paper of Yang \cite{yang1967}, where it appears the first time. Given a vector space $V$, a map $R:V\otimes V\rightarrow V\otimes V$ is said to be a \textit{solution of the quantum Yang-Baxter equation} if 
$$ 
R_{12}R_{13}R_{23}=R_{23}R_{13}R_{12}
$$
where $R_{ij}:V\otimes V\otimes V\rightarrow V\otimes V\otimes V$ is the map acting as $R$ on the $(i,j)$ tensor factor and as the identity on the remaining factor. Finding all the solutions of the quantum Yang-Baxter equation seems to be very hard and it is still an open problem. In that regard Drinfeld \cite{drinfeld1992some} suggested the study of an easier case, i.e., the solutions of the quantum Yang-Baxter equation that are induced by the linear extension of a map $\mathcal{R}:X\times X\rightarrow X\times X $, where $X$ is a basis for $V$. A function $\mathcal{R}$ of this type is called a \textit{set-theoretic solution of the quantum Yang-Baxter equation}. In the last years, several authors studied these solutions using an equivalent formulation. Specifically, a map $r:X\times X\to X\times X$ is said to be a \emph{set-theoretic solution of the Yang-Baxter equation} if
$$
r_1r_2r_1 = r_2r_1r_2 ,
$$
where $r_1:= r\times id_X$ and $r_2:= id_X\times r$. It is easy to see that if $\tau:X\times X\rightarrow X\times X$ is the twist map, then a function $\mathcal{R}:X\times X\rightarrow X\times X $ is a set-theoretic solution of the quantum Yang-Baxter equation if and only if the map $r:=\tau\mathcal{R}$ is a set-theoretic solution of the Yang-Baxter equation. 
Now, let $\lambda_x:X\to X$ and $\rho_y:X\to X$ be maps such that $r(x,y) = (\lambda_x(y), \rho_y(x))$ for all $x,y\in X$. A set-theoretic solution of the Yang-Baxter equation $(X, r)$, which we will simply call \emph{solution}, is said to be a left [\,right\,] non-degenerate if $\lambda_x\in Sym(X)$ [\,$\rho_x\in Sym(X)$\,] for every $x\in X$ and \textit{non-degenerate} if it is left and right non-degenerate. By seminal papers of Etingof, Schedler and Soloviev \cite{etingof1998set} and Gateva-Ivanova and Van den Bergh \cite{gateva1998semigroups} the involutive solutions, i.e., the ones such that $r^2=id_X$, have received a lot of attention. 
In this context, various methods to construct new involutive set-theoretic solutions were provided (see for example \cite{rump2007braces,vendramin2016extensions}). A first attempt, made in \cite{etingof1998set}, is based on the notion of \emph{retraction}: starting from a solution $(X,r)$, it allows to construct a new involutive solution, indicated by $Ret(X,r)$, identifying two elements $x,y$ whenever $\lambda_x=\lambda_y$. If the retraction of an involutive solution $(X,r)$ does not provide a new solution, i.e. if $(X,r)=Ret(X,r)$, then $(X,r)$ is called \emph{irretractable}; while if the retraction-process of $(X,r)$ stabilizes to a singleton, then the solution is called \emph{multipermutation} and the number of retractions-iterations is called \emph{multipermutation level}. Roughly speaking, the multipermutation level measures how far  a solution is from being \emph{trivial}, i.e. a solution for which $\lambda_x=\lambda_y$ for all $x,y\in X$. Particular attention was devoted to the class of \emph{indecomposable} involutive solutions, since every indecomposable involutive solution can be constructed by dynamical extension of a simple solution (see \cite[Proposition 2]{cacsp2018}) and, moreover, these solutions carry informations on every involutive solution not necessarily indecomposable, as all the involutive solutions are constructed by the indecomposable ones (see \cite[Section 2]{etingof1998set}). A successful strategy consist of studying these solutions by associating them various algebraic structures, such as cycle sets, biracks, structure monoids and groups (see for example \cite{chouraqui2010garside,JePiZa20x,rump2020}). In that regard, in 2007 Rump \cite{rump2007braces} introduced an algebraic structure called \textit{left brace}. Recall that a left brace is a set $A$ with two operations $+$ and $\circ$ such that $(A,+)$ is an abelian group, $(A,\circ)$ is a group and 
$$a\circ (b+c)+a=a\circ b+a \circ c$$
for every $a,b,c \in A$.  As showed in \cite[Section 1]{rump2007braces}, left braces provide involutive solutions. Conversely, every involutive solution can be constructed by a left brace (see \cite{bachiller2016solutions} for more details) for which the multiplicative group coincides with a standard permutation group called the \emph{associated permutation group}. In particular, an arbitrary indecomposable solution $(X,r)$ can be recovered by a suitable left brace $B$ and a core-free subgroup $H$ of $(B,\circ)$, identifying $X$ with the left cosets $B/H$.\\
In recent years, left braces are used is a systematic way to give structural results on indecomposable multipermutation solutions, as made for example in \cite{cedo2020primitive,jedlivcka2021cocyclic,rump2020one}. 
Much less is known about indecomposable solutions that are not multipermutation: the only remarkable results are recently given in \cite{castelli2022characterization,cedo2021constructing,cedo2022new} for the family of the simple ones. Moreover, up to now there is not an analogue of the multipermutation level to measure in some sense the complexity of an indecomposable solution that is not a multipermutation solution. To partially fill these discrepancies, in this paper we change point of view studying the notion of indecomposable solution of \emph{finite primitive level}. It was introduced the first time in \cite{cedo2021constructing}, after the main result of \cite{cedo2020primitive} in which it was shown that, apart the indecomposable involutive solutions of prime size, all the indecomposable involutive solutions have imprimitive associated permutation group. This implicitly suggested a new perspective to study indecomposable solutions, focusing on the imprimitive blocks systems.  Actually, the family of indecomposable involutive solutions of finite primitive level is an unexplored topic, except for a recent paper of the author \cite{castelli2022simplicity}, where a generalization on the non-involutive case is also given. The first main result of the paper provides a group-theoretic characterization of indecomposable solutions of finite primitive level by means of a standard subgroup of the associated permutation group called the \emph{displacements group}, already considered in \cite{jedlivcka2023indecomposable} to study multipermutation solutions and in \cite{bon2019} to study latin solutions. We remark that this result, even if is proved by means of left braces and cycle sets, allow to detect all the indecomposable involutive solutions of finite primitive level only focusing on the action of the displacements groups. As an application of this fact, we find all the indecomposable involutive solutions of finite primitive level among the ones having size $\leq 9$: we summarize the computation in Section $3$. Here, we also exhibit several examples of indecomposable involutive solutions of finite primitive level and we compute the value of the primitive level for some families of solutions. In this context, the links with other classes of solutions, such as latin solutions and soluble solutions, are also discussed. Moreover, we consider the cycles decomposition of the maps $\lambda_x$ of these solutions and we take advantage of our result to give a partial answer to \cite[Question 3.16]{RaVe21}, providing a decomposability criterion for multipermutation solutions.
Similarly to the approach used for multipermutation solution, for which several authors provided nice description of the ones having low multipermutation level (see for example \cite{capiru2020,JePiZa20x}), in the last part of the paper we focus on indecomposable solution having primitive level $2$. In particular, following \cite[Question 3.2]{cedo2021constructing}, we provide a left brace-theoretic description of these solution, and we illustrate it by an example.

\section{Basic definitions and results}

In this section we give the preliminaries involving cycle sets and braces useful through the paper. 

\subsection{Solutions of the Yang-Baxter equation and cycle sets}
In \cite{rump2005decomposition}, Rump found a one-to-one correspondence between solutions and an algebraic structure with a single binary operation, which he called \emph{non-degenerate cycle sets}.
To illustrate this correspondence, let us firstly recall the following definition.
%basic definitions involving cycle sets that are mainly contained in \cite{rump2005decomposition}.

\begin{defin}[pag. 45, \cite{rump2005decomposition}]
A pair $(X,\cdot)$ is said to be a \emph{cycle set} if each left multiplication $\sigma_x:X\longrightarrow X,$ $y\mapsto x\cdot y$ is bijective and 
$$(x\cdot y)\cdot (x\cdot z)=(y\cdot x)\cdot (y\cdot z), $$
for all $x,y,z\in X$.  Moreover, a cycle set $(X,\cdot)$ is called \textit{non-degenerate} if the squaring map $\mathfrak{q}:X\longrightarrow X$, $x\mapsto x\cdot x$ is bijective.
\end{defin}

\begin{ex}
    The simplest example of cycle set is the one given by $X:=\mathbb{Z}/n\mathbb{Z}$ and $x\cdotp y:=\alpha(y)$, for an arbitrary number $n$ and a permutation $\alpha$ of $Sym(X)$. We will call these cycle sets \emph{trivial}.
\end{ex}

Cycle sets are useful to construct new solutions of the Yang-Baxter equation.

\begin{prop}[Propositions 1-2, \cite{rump2005decomposition}]\label{corrisp}
Let $(X,\cdot)$ be a cycle set. Then the pair $(X,r)$, where $r(x,y):=(\sigma_x^{-1}(y),\sigma_x^{-1}(y)\cdot x)$, for all $x,y\in X$, is an involutive left non-degenerate solution of the Yang-Baxter equation which we call the associated solution to $(X,\cdot)$. Moreover, this correspondence is one-to-one.
\end{prop}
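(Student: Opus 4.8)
The plan is to verify the three asserted properties of $r$—involutivity, left non-degeneracy, and the braid relation—directly from the defining identity of a cycle set, and then to exhibit the inverse construction witnessing bijectivity. Throughout I would write $r(x,y)=(\lambda_x(y),\rho_y(x))$, so that by definition $\lambda_x=\sigma_x^{-1}$ and $\rho_y(x)=\sigma_x^{-1}(y)\cdot x=\sigma_{\sigma_x^{-1}(y)}(x)$. Left non-degeneracy is then immediate, since each $\lambda_x=\sigma_x^{-1}$ lies in $Sym(X)$ by the very definition of a cycle set. For involutivity, put $a:=\sigma_x^{-1}(y)$ and $b:=a\cdot x=\sigma_a(x)$, so $r(x,y)=(a,b)$; then $\sigma_a^{-1}(b)=\sigma_a^{-1}\sigma_a(x)=x$ and $x\cdot a=\sigma_x(\sigma_x^{-1}(y))=y$, whence $r(a,b)=(\sigma_a^{-1}(b),\sigma_a^{-1}(b)\cdot a)=(x,y)$ and $r^2=id_X$. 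Note that this step uses only the bijectivity of the maps $\sigma_x$, not the cycle set identity.

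The heart of the proof is the braid relation $r_1r_2r_1=r_2r_1r_2$, which is also the step I expect to be the main obstacle, since it requires comparing all three coordinates of maps on $X\times X\times X$. Expanding $r_1r_2r_1(x,y,z)$ with $a:=\sigma_x^{-1}(y)$, $b:=a\cdot x$ and $c:=\sigma_b^{-1}(z)$, the first coordinate is $\sigma_a^{-1}\sigma_b^{-1}(z)$; expanding $r_2r_1r_2(x,y,z)$, the first coordinate is $\sigma_x^{-1}\sigma_y^{-1}(z)$. These agree for all $z$ precisely when $\sigma_b\sigma_a=\sigma_y\sigma_x$. Since $a=\sigma_x^{-1}(y)$ gives $x\cdot a=y$ while $b=a\cdot x$, the required identity is $\sigma_{a\cdot x}\sigma_a=\sigma_{x\cdot a}\sigma_x$, which is exactly the operator form of the cycle set axiom applied to the pair $(a,x)$; and as $(x,y)$ ranges over $X\times X$ the pair $(a,x)$ ranges over all of $X\times X$, so the first coordinates coincide if and only if $(X,\cdot)$ is a cycle set.

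The equality of the remaining two coordinates is then the genuinely computational part of the obstacle: one rewrites them in the same fashion and checks that they are forced by further applications of the cycle set identity together with the involutivity relation $\rho_y(x)=\lambda_{\lambda_x(y)}^{-1}(x)$ recorded above. Equivalently, I would invoke the standard fact that for an involutive, left non-degenerate map the braid relation is equivalent to the single operator identity $\lambda_x\lambda_y=\lambda_{\lambda_x(y)}\lambda_{\rho_y(x)}$, which for our $r$ unwinds to the very relation $\sigma_b\sigma_a=\sigma_y\sigma_x$ just treated.

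Finally, for the bijectivity of the correspondence I would construct the inverse. Given an involutive left non-degenerate solution $(X,r)$ with $r(x,y)=(\lambda_x(y),\rho_y(x))$, set $x\cdot y:=\lambda_x^{-1}(y)$; then $\sigma_x=\lambda_x^{-1}\in Sym(X)$, and reversing the first-coordinate computation above shows that the braid relation forces the cycle set axiom, so $(X,\cdot)$ is a cycle set. Applying the forward construction to this $(X,\cdot)$ returns a solution whose first component is $\sigma_x^{-1}(y)=\lambda_x(y)$ and whose second component is $\lambda_{\lambda_x(y)}^{-1}(x)$, and this equals $\rho_y(x)$ because involutivity of $r$ gives $\lambda_{\lambda_x(y)}(\rho_y(x))=x$. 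Conversely, starting from a cycle set, forming $r$, and passing to $\lambda_x^{-1}=\sigma_x$ recovers the original operation. Hence the two assignments are mutually inverse, which is the asserted one-to-one correspondence.
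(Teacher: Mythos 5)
The paper itself contains no proof of this proposition: it is imported directly from Rump's paper (Propositions 1--2 of the cited reference), so your attempt is being compared against a citation rather than an argument. The parts you actually execute are correct: left non-degeneracy is immediate from $\lambda_x=\sigma_x^{-1}$; the involutivity computation (which, as you rightly note, uses only bijectivity of the maps $\sigma_x$) is fine; the first-coordinate analysis correctly identifies the equality $\sigma_b\sigma_a=\sigma_y\sigma_x$, with $a=\sigma_x^{-1}(y)$, $b=a\cdot x$, $y=x\cdot a$, as the operator form of the cycle set axiom at the pair $(a,x)$, and the observation that $(a,x)$ sweeps out all of $X\times X$ is exactly what makes the converse direction (solution implies cycle set) work; the mutual-inverse bookkeeping for the one-to-one correspondence is also fine.

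The soft spot is precisely where you place it, and it is a genuine gap as written: the second and third coordinates of the braid relation are never verified. Your fallback of invoking the ``standard fact'' that, for involutive left non-degenerate maps, the braid relation is equivalent to $\lambda_x\lambda_y=\lambda_{\lambda_x(y)}\lambda_{\rho_y(x)}$ does not repair this, because under $\sigma_x=\lambda_x^{-1}$ that fact \emph{is} the proposition being proven (it is the content of Rump's Proposition 1); citing it just reproduces the paper's own citation and leaves your computational route as the only actual proof. Fortunately that route does close, with three further applications of the axiom. Keep $a=\sigma_x^{-1}(y)$, $b=a\cdot x$, $c=\sigma_b^{-1}(z)$, and set $d=\sigma_y^{-1}(z)$, $e=\sigma_x^{-1}(d)$, $f=e\cdot x$, $g=\sigma_f^{-1}(d\cdot y)$, so that $r_1r_2r_1(x,y,z)=(\sigma_a^{-1}(c),\,\sigma_a^{-1}(c)\cdot a,\,c\cdot b)$ and $r_2r_1r_2(x,y,z)=(e,\,g,\,g\cdot f)$. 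Your first step gives $\sigma_a^{-1}(c)=e$. For the second coordinates one needs $e\cdot a=g$, i.e. $(e\cdot x)\cdot(e\cdot a)=d\cdot y$, and the axiom applied to the triple $(e,x,a)$ gives $(e\cdot x)\cdot(e\cdot a)=(x\cdot e)\cdot(x\cdot a)=d\cdot y$, since $x\cdot e=d$ and $x\cdot a=y$. For the third coordinates, the axiom applied to $(a,x,e)$ gives $\sigma_b(a\cdot e)=(a\cdot x)\cdot(a\cdot e)=(x\cdot a)\cdot(x\cdot e)=y\cdot d=z$, hence $c=a\cdot e$, and then the axiom applied to $(a,e,x)$ gives $c\cdot b=(a\cdot e)\cdot(a\cdot x)=(e\cdot a)\cdot(e\cdot x)=g\cdot f$. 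With these lines inserted your proof becomes complete and self-contained, which is more than the paper offers.
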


\begin{conv}
    From now on, every cycle set will be non-degenerate. All the results will be given in the language of cycle sets, but they can be translated by \cref{corrisp}.
\end{conv}

An useful tool to construct new cycle sets from a given one, introduced in \cite{etingof1998set}, is the so-called \textit{retract relation}.
%an equivalence relation on $X$ which we denote by $\sim_r$. Precisely, if $(X,r)$ is a solution, then $x\sim_r y$ if and only if $\lambda_x=\lambda_y$, for all $x,y\in X$. In this way, $(X,r)$ induces another solution, having the quotient $X/\sim_r$ as underlying set, which is named \textit{retraction} of $(X,r)$ and is denoted by $\Ret(X, r)$. 
%As one can expect, the retraction of a solution corresponds to the retraction of a non-degenerate cycle set.
Specifically, in \cite{rump2005decomposition} Rump showed that the binary relation $\sim_\sigma$ on $X$ given by 
$$x\sim_\sigma y :\Longleftrightarrow \sigma_x = \sigma_y$$ 
for all $x,y\in X$, is a \emph{congruence} of $(X,\cdot)$, i.e. an equivalence relation for which $x\sim_\sigma y$ and $x'\sim_\sigma y'$ implies $x\cdotp x'\sim_\sigma y\cdotp y',$ for all $x,x',y,y'\in X$. In \cite{etingof1998set} (and indipendently in \cite{rump2005decomposition}) it was showed that the quotient $X/\sim_{\sigma}$, which we denote by $Ret(X)$, is a cycle set, that we will call \emph{retraction} of $(X,\cdot)$. 
%As the name suggests, if $(X, \cdot)$ is the cycle set associated to a solution $(X,r)$, then the retraction $Ret(X)$ is the cycle set associated to $\Ret(X,r)$. 
%Besides, a cycle set $X$ is said to be \textit{irretractable} if $Ret(X)=X$, otherwise it is called \textit{retractable}.
An important class of cycle sets is given by the ones having finite multipermutation level.

\begin{defin}
    A cycle set $X$ is said to be of \emph{multipermutation level $n$} if $n$ is the minimal non-negative integer such that $|Ret^n(X)|=1$, where $Ret^n(X) $ is the cycle set defined inductively by $Ret^0(X)=X$ and $Ret^n(X)=Ret(Ret^{n-1}(X))$, for all $n\in \mathbb{N}$.
\end{defin}

In a classical way we can define the notion of cycle sets homomorphism.

\begin{defin}
    Let $X,Y$ be cycle sets. A map $p:X\longrightarrow Y$ is said to be a \emph{homomorphism} between $X$ and $Y$ if $p(x\cdotp y)=p(x)\cdotp p(y)$ for all $x,y\in X$. A surjective homomorphism is called \emph{epimorphism}, while a bijective homomorphism is said to be an \emph{isomorphism}.
\end{defin}

Two standard permutation groups related to a cycle sets $X$ are the one generated by the set $\{ \sigma_x|\hspace{1mm} x\in X\}$, called the \emph{associated permutation group of $X$} and indicated by $\mathcal{G}(X)$, and the one generated by the set $\{ \sigma_x\sigma_y^{-1}|\hspace{1mm} x,y\in X\}$, called the \emph{displacements group of $X$} and indicated by $Dis(X)$.\\
In this context, our attention will be focused on indecomposable cycle sets.

\begin{defin}
A cycle set $(X,\cdot)$ is said to be \textit{indecomposable} if the permutation group $\mathcal{G}(X)$ acts transitively on $X$. 
\end{defin}

\smallskip

Following the paper by Vendramin \cite{vendramin2016extensions}, if $X$ is a cycle set, $S$ a set and $\alpha:X\times X\times S\longrightarrow \Sym(S)$, $\alpha(x,y,s)\mapsto\alpha_{(x,y)}(s,-)$ a function such that
\begin{equation}\label{cociclo}
\alpha_{(x\cdot y,x\cdot z)}(\alpha_{(x,y)}(r,s),\alpha_{(x,z)}(r,t))=\alpha_{(y\cdot x,y\cdot z)}(\alpha_{(y,x)}(s,r),\alpha_{(y,z)}(s,t)),
\end{equation}
for all $x,y,z\in X$ and $r,s,t\in S$, then $\alpha$ is said to be a \textit{dynamical cocycle} and the operation $\cdot$ given by 
$$
(x,s)\cdot (y,t):=(x\cdot y,\alpha_{(x,y)}(s,t)),
$$
for all $x,y\in X$ and $s,t\in S$ makes $X\times S$ into a cycle set which we denote by $X\times_{\alpha} S$ and we call \textit{dynamical extension} of $X$ by $\alpha$. 
%A dynamical cocycle is said to be \textit{constant} if $\alpha_{(x,y)}(s,-)=\alpha_{(x,y)}(t,-)$, for all $x,y\in X$ and $s,t\in S$. In this case, $X\times_{\alpha} S$ is said to be a \textit{constant dynamical extension}.\\
A dynamical extension $X\times_{\alpha} S$ is called \textit{indecomposable} if $X\times_{\alpha} S$ is an indecomposable cycle set: by \cite[Theorem 7]{cacsp2018}, this happens if and only if $X$ is an indecomposable cycle set and the subgroup of $\mathcal{G}(X\times S)$ generated by $\{h \ | \ \forall \, s\in S \quad h(y,s)\in \{y\}\times S \}$ acts transitively on $\{y\}\times S$, for some $y\in X$. By results contained in \cite{cacsp2018,vendramin2016extensions}, the following corollary, which is of crucial importance for this paper, follows.

\begin{cor}[Theorem 7 of \cite{cacsp2018} and Theorem 2.12 of \cite{vendramin2016extensions}]\label{dyn}
    Let $X$ be an indecomposable cycle set, $Y$ a cycle set and $p:X\longrightarrow Y $ a cycle set epimorphism. Then, there exist a set $S$ and a dynamical cocycle $\alpha$ such that $X$ is isomorphic to $Y\times_{\alpha} S $.
\end{cor}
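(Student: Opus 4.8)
The plan is to realize the fibers of $p$ as the ``vertical'' copies of a single set $S$ and then to read the dynamical cocycle off the operation of $X$. The starting observation is that, because $p$ is a homomorphism, each left multiplication $\sigma_x$ of $X$ permutes the fibers of $p$ compatibly with the action of $\sigma_{p(x)}$ on $Y$: for $z\in X$ one computes $p(\sigma_x(z))=p(x\cdot z)=p(x)\cdot p(z)=\sigma_{p(x)}(p(z))$, so $\sigma_x$ sends $p^{-1}(y)$ into $p^{-1}(\sigma_{p(x)}(y))$ for every $y\in Y$. Since $\sigma_x$ is a bijection of $X$ and $\sigma_{p(x)}$ a bijection of $Y$, this inclusion is forced to be an equality, and $\sigma_x$ restricts to a bijection $p^{-1}(y)\to p^{-1}(\sigma_{p(x)}(y))$. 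The same therefore holds for $\sigma_x^{-1}$ and, by composition, for every element of $\mathcal{G}(X)$.

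First I would use indecomposability to show that all fibers have the same cardinality. Given $y,y'\in Y$, choose $x\in p^{-1}(y)$ and $x'\in p^{-1}(y')$, which is possible since $p$ is surjective. By indecomposability there is $g\in\mathcal{G}(X)$ with $g(x)=x'$; by the previous paragraph $g$ permutes the fibers by bijections, and it carries $p^{-1}(y)$ bijectively onto $p^{-1}(p(x'))=p^{-1}(y')$. Hence $|p^{-1}(y)|=|p^{-1}(y')|$, and I fix a set $S$ of this common cardinality together with bijections $\phi_y\colon S\to p^{-1}(y)$ for each $y\in Y$. These assemble into a bijection $\Phi\colon Y\times S\to X$, $(y,s)\mapsto\phi_y(s)$, satisfying $p(\Phi(y,s))=y$.

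Next I would transport the operation of $X$ along $\Phi$ and extract $\alpha$. Setting $(y,s)\ast(y',s'):=\Phi^{-1}\bigl(\Phi(y,s)\cdot\Phi(y',s')\bigr)$ and noting that the first coordinate of the right-hand side equals $p(\Phi(y,s))\cdot p(\Phi(y',s'))=y\cdot y'$, I may write $(y,s)\ast(y',s')=(y\cdot y',\alpha_{(y,y')}(s,s'))$, which defines $\alpha$. That each $\alpha_{(y,y')}(s,-)$ lies in $\Sym(S)$ follows because this map equals $\phi_{y\cdot y'}^{-1}\circ\bigl(\sigma_{\phi_y(s)}|_{p^{-1}(y')}\bigr)\circ\phi_{y'}$, a composite of bijections by the fiber-wise statement above. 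Finally, since $\ast$ is nothing but the cycle-set operation of $X$ read through $\Phi$, the pair $(Y\times S,\ast)$ is a cycle set; writing out its defining identity and comparing second coordinates yields exactly the cocycle condition \eqref{cociclo}, so $\alpha$ is a dynamical cocycle and $Y\times_\alpha S=(Y\times S,\ast)\cong X$.

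I expect the main obstacle to be the equality of fiber cardinalities, as this is the only place where indecomposability is genuinely used: a non-transitive $\mathcal{G}(X)$ could leave fibers of different sizes, ruling out a uniform extension. Everything else is a transport-of-structure computation. I would also remark that non-degeneracy of $Y\times_\alpha S$ is not a separate point, being inherited from that of $X$ through the isomorphism $\Phi$.
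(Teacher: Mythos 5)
Your proof is correct. Note that the paper itself gives no argument for this corollary: it is stated as an immediate consequence of Theorem 2.12 of \cite{vendramin2016extensions} and Theorem 7 of \cite{cacsp2018}, so what you have done is reconstruct, self-contained, exactly the content of those cited results. Your decomposition matches theirs: Vendramin's theorem says that an epimorphism of cycle sets whose fibers all have the same cardinality exhibits the source as a dynamical extension of the target (your transport-of-structure step, where bijectivity of each $\alpha_{(y,y')}(s,-)$ comes from the fiber-wise bijectivity of $\sigma_{\phi_y(s)}$, and the cocycle identity \eqref{cociclo} drops out of the cycle-set identity by comparing second coordinates), while the role of indecomposability is precisely to force the constant-fiber hypothesis (your observation that each $g\in\mathcal{G}(X)$ permutes fibers bijectively, so transitivity equalizes their cardinalities). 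Two small points worth making explicit if you write this up: the step ``the inclusion $\sigma_x(p^{-1}(y))\subseteq p^{-1}(\sigma_{p(x)}(y))$ is forced to be an equality'' deserves its one-line justification (surjectivity of $\sigma_x$ together with injectivity of $\sigma_{p(x)}$ shows any preimage of a point of $p^{-1}(\sigma_{p(x)}(y))$ must lie in $p^{-1}(y)$), since a bare appeal to bijectivity is not quite enough in the infinite case; and your closing remark on non-degeneracy is indeed needed under the paper's standing convention that all cycle sets are non-degenerate, and is correctly disposed of by transporting the squaring map through $\Phi$.
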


\subsection{Indecomposable cycle sets and left braces}
At first, we introduce the following definition that, as observed in \cite{cedo2014braces}, is equivalent to the original introduced by Rump in \cite{rump2007braces}.

\begin{defin}[\cite{cedo2014braces}, Definition 1]
A set $B$ endowed of two operations $+$ and $\circ$ is said to be a \textit{left brace} if $(B,+)$ is an abelian group, $(B,\circ)$ a group, and
$$
    a\circ (b + c) + a
    = a\circ b + a\circ c,
$$
for all $a,b,c\in B$.
\end{defin}

\begin{exs}\label{esbrace}
\begin{itemize}
    \item[1)] If $X$ is a cycle set, then one can show that the free abelian group $\mathbb{Z}^X$ gives rise to a left brace $(\mathbb{Z}^X,+,\circ)$, where $(\mathbb{Z}^X,\circ)$ is the group having $X$ has generating set and $x\circ y=\sigma^{-1}_x(y)\circ (\sigma^{-1}_x(y)\cdotp x)$, where $x,y\in X$, as relations.
    \item[2)] If $X$ is a cycle set, the associated permutation group $\mathcal{G}(X)$ gives rise to a left brace $(\mathcal{G}(X),+,\circ)$, where $\circ$ is the usual composition in $\mathcal{G}(X)$ (see, for example, \cite[Section 2]{bachiller2015family} for more details). From now on, we will refer to $(\mathcal{G}(X),+,\circ)$ as the \emph{permutation left brace}.
    \item[3)] If $(B,+)$ is an abelian group, then the operation $\circ$ given by $a\circ b:=a+b$ give rise to a left brace which we will call \emph{trivial} 
\end{itemize}

\end{exs}

If $(B_1,+,\circ)$ and $(B_2,+',\circ')$ are left braces, a homomorphism $\psi$ between $B_1$ and $B_2$ is a function from $B_1$ to $B_2$ such that $\psi(a+b)=\psi(a)+'\psi(b)$ and $\psi(a\circ b)=\psi(a)\circ'\psi(b)$, for all $a,b\in B_1$.

\smallskip

Given a left brace $B$ and $a\in B$, let us denote by $\lambda_a:B\longrightarrow B$ the map from $B$ into itself defined by
\begin{equation*}\label{eq:gamma}
    \lambda_a(b):= - a + a\circ b,
\end{equation*} 
for all $b\in B$. 
As shown in \cite[Proposition 2]{rump2007braces} and \cite[Lemma 1]{cedo2014braces}, these maps have special properties. We recall them in the following proposition.
\begin{prop}\label{action}
Let $B$ be a left brace. Then, the following are satisfied: 
\begin{itemize}
\item[1)] $\lambda_a\in\Aut(B,+)$, for every $a\in B$;
\item[2)] the map $\lambda:B\longrightarrow \Aut(B,+)$, $a\mapsto \lambda_a$ is a group homomorphism from $(B,\circ)$ into $\Aut(B,+)$.
\end{itemize}
\end{prop}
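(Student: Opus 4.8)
The plan is to prove the two assertions in an order that avoids circularity: the bijectivity required for part 1) will come most cleanly from the multiplicativity identity of part 2), whereas that identity only needs the \emph{additivity} of the $\lambda_a$. I would begin with a structural remark used throughout: in any left brace the additive identity $0$ coincides with the identity of $(B,\circ)$. Indeed, putting $b=c=0$ in the axiom $a\circ(b+c)+a=a\circ b+a\circ c$ and cancelling $a\circ 0$ in the abelian group $(B,+)$ gives $a\circ 0=a$ for all $a$, so $0$ is a two-sided $\circ$-identity. Consequently $\lambda_0=\id$ and $\lambda_a(0)=-a+a\circ 0=0$.

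Next I would show that each $\lambda_a$ is an endomorphism of $(B,+)$. Rewriting the axiom as $a\circ(b+c)=a\circ b+a\circ c-a$ and inserting it into the definition yields
$$\lambda_a(b+c)=-a+a\circ(b+c)=(-a+a\circ b)+(-a+a\circ c)=\lambda_a(b)+\lambda_a(c),$$
after regrouping via commutativity of $+$; in particular $\lambda_a(-b)=-\lambda_a(b)=a-a\circ b$. I would then establish the composition law $\lambda_a\circ\lambda_b=\lambda_{a\circ b}$ by a direct expansion: applying additivity to $\lambda_a(\lambda_b(c))=\lambda_a(-b+b\circ c)$, substituting $\lambda_a(-b)=a-a\circ b$, and using associativity $a\circ(b\circ c)=(a\circ b)\circ c$, the terms $a$ and $-a$ cancel and one is left precisely with $-(a\circ b)+(a\circ b)\circ c=\lambda_{a\circ b}(c)$.

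With these in hand both statements follow. For part 1), specializing the composition law to the $\circ$-inverse $a^{-1}$ gives $\lambda_a\circ\lambda_{a^{-1}}=\lambda_{a\circ a^{-1}}=\lambda_0=\id$, and symmetrically on the other side, so the additive endomorphism $\lambda_a$ is invertible and therefore lies in $\Aut(B,+)$. Part 2) is then the composition law read as the homomorphism property of $\lambda\colon(B,\circ)\to\Aut(B,+)$. The computations are routine; the only point demanding care---and the natural obstacle---is precisely the logical ordering, namely that additivity be secured before the composition identity and the composition identity before bijectivity, so that no step tacitly presupposes that $\lambda_a$ is already an automorphism.
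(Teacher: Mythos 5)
Your proof is correct, and each step is in the right logical order: the brace axiom gives additivity of $\lambda_a$, additivity plus associativity of $\circ$ gives the composition law $\lambda_a\lambda_b=\lambda_{a\circ b}$, and the composition law applied to $a^{-1}$ gives bijectivity, so nothing presupposes that $\lambda_a$ is already an automorphism. Note that the paper itself gives no proof of \cref{action} --- it recalls the result from \cite[Proposition 2]{rump2007braces} and \cite[Lemma 1]{cedo2014braces} --- and your argument is precisely the standard verification carried out in those references, including the preliminary observation that the additive and multiplicative identities coincide.
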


%\noindent The map $\lambda$ is of crucial importance to construct cycle sets (and hence solutions of the Yang-Baxter equation) using left braces, as one can see in the following proposition.

%\begin{prop}[Lemma 2, \cite{cedo2014braces}]
%Let A be a left brace and $\cdotp$ the binary operation on $A$ map given by
%$$
%a\cdotp b:=\lambda_a^{-1}(b), $$
%for all $a,b\in A$. Then, $(A,\cdotp)$ is a cycle set.
%\end{prop}

%\noindent Therefore, the previous proposition and \cref{esbrace} show that if $X$ is a cycle set, then we can construct a cycle set on the permutation group $\mathcal{G}(X)$.\\
For the following definition, we refer the reader to \cite[pg. 160]{rump2007braces} and \cite[Definition 3]{cedo2014braces}.

\begin{defin}
Let $B$ be a left brace. A subset $I$ of $B$ is said to be a \textit{left ideal} if it is a subgroup of the multiplicative group and $\lambda_a(I)\subseteq I$, for every $a\in B$. Moreover, a left ideal is an \textit{ideal} if it is a normal subgroup of the multiplicative group.
\end{defin}
% \noindent Given an ideal $I$, it holds that the structure $A/I$ is a left brace called the \emph{quotient left brace} of $A$ modulo $I$.
%
% \begin{ex}
% It is easy to check that $6\mathbb{Z}$ is an ideal of the brace $A := (\mathbb{Z},+,\circ)$ in \cref{ex-braces-2}. Hence, the brace in \cref{ex-braces-1} is the brace quotient of $A$.
% \end{ex}
\noindent As one can expect, if $I$ is an ideal of a left brace $B$, then the structure $B/I$ is a left brace called the \emph{quotient left brace} of $B$ modulo $I$. Moreover, the ideal $\{0\}$ will be called the \emph{trivial} ideal and and a left brace $B$ which contains no ideals different from $\{0\}$ and $B$ will be called a \emph{simple} left brace.\\
A standard ideal of a left brace $B$, given in \cite[Corollary of Proposition 6]{rump2007braces} and indicated by $B^2$, is the one given by the additive subgroup generated by the set $\{a*b\hspace{1mm} |\hspace{1mm} a,b\in B \}$, where $a*b:=-a+a\circ b-b$ for all $a,b\in B$.\\
Other ideals can be obtained by left braces homomorphisms. Indeed, if $B_1$ and $B_2$ are left braces and $\psi$ a homomorphism from $B_1$ to $B_2$, the kernel of $\psi$ is an ideal of $B_1$, where the kernel, which we indicate by $Ker(\psi)$, is the set given by $Ker(\psi):=\{b\in B_1|\psi(b)=0\}$.\\
In \cite{rump2007braces}, Rump also introduced an other ideal that, in the terms of \cite[Section 4]{cedo2014braces}, is the following.
\begin{defin}
Let $B$ be a left brace. Then, the set 
$$
    Soc(B) := \{a\in A \ | \ \forall \,
     b\in B \quad  a + b = a\circ b \}
$$
is named \emph{socle} of $B$.
\end{defin}
\noindent Clearly,
$Soc(B) := \{a\in B \ | \ \lambda_a = \id_B\}$.
Moreover, we have that $Soc(B)$ is an ideal of $B$. The two left braces given in \cref{esbrace} are related by the socle. Indeed, given a cycle set $X$, one can show that the map $\theta:X\longrightarrow \mathcal{G}(X)$ given by $x\mapsto \sigma^{-1}_x $ can be extended to a surijective left brace homomorphism $\bar{\theta}:\mathbb{Z}^X\longrightarrow \mathcal{G}(X)$ such that $Ker(\bar{\theta})=Soc(\mathbb{Z}^X)$.  
%If a cycle set $X$ is irretractable, then the socle of its associated left brace is trivial.

%\begin{prop}(Lemma 2.1, \cite{bachiller2015family})\label{soczero}
%Let $X$ be an irretractable cycle set. Then, $Soc(\mathcal{G}(X))=\{id_X\}$. 
%\end{prop}

%\noindent The socle is useful to establish a link between a left brace $A$ and the permutation group associated to the cycle set $(A,\cdotp)$.

%\begin{prop}(Lemma 2.2, \cite{cedo2021constructing})\label{isosocle}
%Let $A$ be a left brace and $\mathcal{G}(A)$ the permutation group associated to the cycle set $(A,\cdotp)$. Then, as left braces, $A/Soc(A)\cong \mathcal{G}(A)$. 
%\end{prop}

\smallskip

Left braces homomorphisms are strongly related to cycle sets homomorphisms. Indeed, every cycle set epimorphism $p:X\longrightarrow Y$ induces a left brace epimorphisms $p':\mathbb{Z}^X\longrightarrow \mathbb{Z}^Y$ by $x\mapsto p(x)$ for all $x\in X$ and $\bar{p}:\mathcal{G}(X)\longrightarrow \mathcal{G}(Y)$ by $\sigma_x\mapsto \sigma_{p(x)}$, for all $x\in X$. 

\begin{defin}
    A cycle sets epimorphism $p:X\longrightarrow Y$ is said to be a \emph{covering} if induces a left braces isomorphism $\bar{p}:\mathcal{G}(X)\longrightarrow \mathcal{G}(Y)$. 
\end{defin}

%\begin{prop}\label{idea2}
%    Let $B$ be a finite left brace with a transitive cycle base $Y$ and let $e\in Y$. Then, $B^2=<\lambda_a(e)\circ \lambda_b(e)^-|a,b\in B>_{\circ}=<\lambda_a(e)- \lambda_b(e)|a,b\in B>_{+}$.
%\end{prop}

In the last part of the section, we recall the theory mainly developed in \cite{rump2023primes, rump2020}, that allow to detect more informations on indecomposable cycle sets (and their epimorphic images) by left braces.

\begin{prop}[Theorem 3, \cite{rump2020}]\label{cosmod}
Let $(B,+,\circ)$ be a left brace, $Y\subset B$ a transitive cycle base, $a_1\in Y$,
and $K$ a core-free subgroup of $(B,\circ)$, contained in the
stabilizer  $B_{a_1}$ of $a_1$ (respect to the action $\lambda$). Then, the pair $(X,\cdotp)$ given by $X:=B/K$ and $\sigma_{x\circ K}(y\circ K):=\lambda_x(a_{1})^-\circ y \circ K$ give rise to an indecomposable cycle set with $ \mathcal{G}(X)\cong B$.\\
 Conversely, every indecomposable cycle set $(X,\cdotp)$ with $\mathcal{G}(X) \cong B $ (as left braces) can be obtained in this way.
\end{prop}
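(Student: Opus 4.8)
The plan is to establish the two directions separately. For the forward construction, write $\bar x:=x\circ K$ and abbreviate $\phi(x):=\lambda_x(a_1)^-$, so that $\sigma_{\bar x}(\bar y)=\phi(x)\circ y\circ K$.

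First I would check that $\sigma$ is well defined. Since $\lambda$ is a homomorphism $(B,\circ)\to\Aut(B,+)$ by \cref{action} and $K\subseteq B_{a_1}$, for $k\in K$ one has $\lambda_{x\circ k}(a_1)=\lambda_x(\lambda_k(a_1))=\lambda_x(a_1)$, so $\phi$ descends to $X=B/K$; independence of the representative of $\bar y$ is immediate. Each $\sigma_{\bar x}$ is then the left translation $\bar y\mapsto\phi(x)\circ y\circ K$ on $B/K$, hence a bijection, which gives the bijectivity part of the definition. For the main identity I would use the equivalent form $\sigma_{\bar x\cdot\bar y}\,\sigma_{\bar x}=\sigma_{\bar y\cdot\bar x}\,\sigma_{\bar y}$: as these are translations, it suffices to prove the single brace identity
$$\phi(\phi(x)\circ y)\circ\phi(x)=\phi(\phi(y)\circ x)\circ\phi(y).$$
Writing $u:=\lambda_x(a_1)$ and $v:=\lambda_y(a_1)$ and using $\lambda_{u^-}=\lambda_u^{-1}$, the left-hand side collapses to $(u\circ\lambda_u^{-1}(v))^-$, and the basic brace relation $a\circ b=a+\lambda_a(b)$ turns this into $(u+v)^-$; by symmetry the right-hand side is $(v+u)^-$, so commutativity of $(B,+)$ finishes it. This is the computational heart of the statement, and it is short. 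Non-degeneracy of $(X,\cdot)$ is automatic when $X$ is finite and is otherwise subsumed in the cycle-base hypotheses.

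Next I would prove indecomposability and the brace isomorphism. As $\bar x$ runs over $X$, transitivity of the cycle base gives $\{\lambda_x(a_1)\,|\,x\in B\}=Y$, so the generators $\sigma_{\bar x}$ are exactly the left translations by $Y^-$; since $Y$ generates $(B,\circ)$ they generate the full left-translation group of $B/K$, which acts transitively, proving indecomposability, and which is faithful because $K$ is core-free. Hence $\Psi\colon b\mapsto(\bar c\mapsto b\circ c\circ K)$ is a group isomorphism $(B,\circ)\to(\mathcal G(X),\circ)$. To upgrade it to a left-brace isomorphism I would use that in any left brace the addition is recovered from $\circ$ and $\lambda$ via $a+b=a\circ\lambda_a^{-1}(b)$, so it is enough to check that $\Psi$ intertwines the two $\lambda$-actions. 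The cleanest route is through the structure brace $\mathbb Z^X$: the assignment $\bar x\mapsto\lambda_x(a_1)\in Y$ should extend to a brace epimorphism $\Theta\colon\mathbb Z^X\to B$ (surjective because $Y$ generates $(B,\circ)$), and one verifies $Ker(\Theta)=Soc(\mathbb Z^X)=Ker(\bar\theta)$, so that $\Theta$ factors as a brace isomorphism $\mathcal G(X)\cong B$. I expect the verification that $\Theta$ respects $\circ$ together with the identification of its kernel — which is exactly where $\lambda$-invariance of $Y$ is used — to be the main obstacle.

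Finally, for the converse I would run the construction backwards. Given an indecomposable $(X,\cdot)$ with a fixed brace isomorphism $\mathcal G(X)\cong B$, transport the set $Y:=\{\sigma_x^{-1}\,|\,x\in X\}$ (the image of the map $\theta$ recalled before) into $B$; it generates $(B,\circ)$, is $\lambda$-invariant, and is transitive precisely because $\mathcal G(X)$ acts transitively on $X$ and $\theta$ is equivariant, so it is a transitive cycle base. Fixing $x_0\in X$, I put $a_1:=\sigma_{x_0}^{-1}$ and let $K$ be the stabiliser of $x_0$ in the action of $\mathcal G(X)$ on $X$; faithfulness of this action makes $K$ core-free, the orbit map identifies $X$ with $B/K$, and equivariance of $\theta$ together with $Y\cong B/B_{a_1}$ forces $K\subseteq B_{a_1}$. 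It then remains to check that the cycle set reconstructed from $(B,Y,a_1,K)$ by the formula of the statement returns the original operation on $X$, which closes the correspondence.
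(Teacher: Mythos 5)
The paper offers no proof of this proposition at all: it is imported verbatim as Theorem 3 of \cite{rump2020}, so your attempt can only be measured against Rump's original argument, not against anything internal to the paper. The parts you actually carry out are correct. Well-definedness of $\sigma_{x\circ K}$ follows from $K\leq B_{a_1}$ and \cref{action} exactly as you say; the reduction of the cycle set axiom to the single identity $\phi(\phi(x)\circ y)\circ\phi(x)=\phi(\phi(y)\circ x)\circ\phi(y)$ is legitimate (it is the standard reformulation $\sigma_{x\cdotp y}\sigma_x=\sigma_{y\cdotp x}\sigma_y$), and your verification via $a\circ b=a+\lambda_a(b)$ and commutativity of $+$ is the computational heart and is done correctly; core-freeness gives faithfulness and hence the group isomorphism $\Psi\colon(B,\circ)\to(\mathcal{G}(X),\circ)$, and transitivity of the translation action gives indecomposability; the converse sketch (take $Y=\{\sigma_x^{-1}\mid x\in X\}$, $a_1=\sigma_{x_0}^{-1}$, $K$ the point stabilizer, and use the equivariance $\lambda_g(\sigma_x^{-1})=\sigma_{g(x)}^{-1}$) is the right argument, and the deferred final check that the reconstruction returns the original operation is a genuinely short computation.

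The genuine gap is precisely where you flag ``the main obstacle'': the proposition asserts $\mathcal{G}(X)\cong B$ \emph{as left braces}, but you only establish an isomorphism of multiplicative groups, leaving unproved both that $\Theta\colon\mathbb{Z}^X\to B$, $x\circ K\mapsto\lambda_x(a_1)$, respects $\circ$ and that $Ker(\Theta)=Soc(\mathbb{Z}^X)$. This is the actual content of the statement — and the part this paper later relies on, e.g.\ in \cref{liv2}, where ideals and $B^2$ of the permutation left brace are transported across this identification — so a proof deferring it is incomplete. It can be closed along your own lines as follows: (i) every left brace is a cycle set via $a\cdotp b:=\lambda_a^{-1}(b)$, and the map $f(x\circ K):=\lambda_x(a_1)$ is a cycle set morphism from $B/K$ to $B$, since $f(\bar{x}\cdotp\bar{y})=\lambda_{\lambda_x(a_1)^-\circ y}(a_1)=\lambda_{\lambda_x(a_1)}^{-1}\lambda_y(a_1)=\lambda_{f(\bar{x})}^{-1}(f(\bar{y}))$; (ii) by the universal property of the structure brace $\mathbb{Z}^X$, $f$ extends to a brace epimorphism $\Theta\colon\mathbb{Z}^X\to B$; (iii) composing $\Theta$ with the translation representation $B\to Sym(B/K)$, which is faithful because $K$ is core-free, yields a multiplicative homomorphism agreeing with $\bar{\theta}$ on the generators, hence equal to $\bar{\theta}$, so $Ker(\Theta)=Ker(\bar{\theta})=Soc(\mathbb{Z}^X)$ and $B\cong\mathbb{Z}^X/Soc(\mathbb{Z}^X)\cong\mathcal{G}(X)$ as left braces. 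A second, smaller defect: dismissing non-degeneracy as ``automatic when $X$ is finite'' silently invokes Rump's nontrivial theorem that finite cycle sets are non-degenerate \cite{rump2005decomposition}, and the statement here is not restricted to finite $B$; in the infinite case this step needs an actual argument rather than the remark that it is ``subsumed in the cycle-base hypotheses''.
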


\begin{conv}
    From now on, a cycle set obtained as in  \cref{cosmod} will be indicated by $C_{B,K,a_1}$.
\end{conv}

\begin{prop}\label{coveringcostr}
Let $p:X\longrightarrow Y$ a convering of indecomposable cycle sets, $\mathcal{G}:= \mathcal{G}(X)$ and $x\in X$. Then, there exist two core-free subgroups $K$ and $H$ contained in $\mathcal{G}_{\sigma_x}$ with $K\leq H$ such that $X$ can be identified with $C_{\mathcal{G},K,\sigma_x}$, $Y$ can be identified with $C_{\mathcal{G},H,\sigma_x}$ and $p$ can be identified with the epimorphism from $C_{\mathcal{G},K,\sigma_x}$ to $C_{\mathcal{G},H,\sigma_x}$ which sends an element $z\circ K$ to $z\circ H$.\\
Conversely, up to isomorphisms, any covering of indecomposable cycle sets arises in this way.
\end{prop}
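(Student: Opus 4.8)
The plan is to realise both $X$ and $Y$ as coset cycle sets over the \emph{same} left brace $\mathcal{G}$ with the \emph{same} base element $\sigma_x$, and then to recognise $p$ as a morphism of $\mathcal{G}$-sets. The covering hypothesis is exactly what makes this possible: since $p$ is a covering, $\bar{p}:\mathcal{G}(X)\to\mathcal{G}(Y)$ is an isomorphism of left braces, so we may identify $\mathcal{G}(Y)$ with $\mathcal{G}:=\mathcal{G}(X)$ along $\bar{p}$. Under this identification the two cycle bases correspond via $\sigma_z\mapsto\sigma_{p(z)}$; in particular $\sigma_{p(x)}$ is identified with $\sigma_x$, and the $\lambda$-stabilizer of $\sigma_{p(x)}$ in $\mathcal{G}(Y)$ becomes the $\lambda$-stabilizer $\mathcal{G}_{\sigma_x}$.

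First I would apply the converse part of \cref{cosmod} twice, always with base element $\sigma_x$ and with a prescribed choice of base point. For $X$, taking the given $x$ as base point, this yields a core-free subgroup $K\le\mathcal{G}_{\sigma_x}$ of $(\mathcal{G},\circ)$ and an identification $X\cong C_{\mathcal{G},K,\sigma_x}$ under which the underlying set of $X$ is $\mathcal{G}/K$, with $x$ the trivial coset $K$ and $K=\mathrm{Stab}_{\mathcal{G}}(x)$. For $Y$, using $\mathcal{G}(Y)\cong\mathcal{G}$, the base element $\sigma_{p(x)}\cong\sigma_x$, and the base point $p(x)$, it yields a core-free subgroup $H\le\mathcal{G}_{\sigma_x}$ and an identification $Y\cong C_{\mathcal{G},H,\sigma_x}$ with underlying set $\mathcal{G}/H$, $p(x)$ the trivial coset $H$, and $H=\mathrm{Stab}_{\mathcal{G}}(p(x))$.

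The key step is to pin down the action. From the defining formula $\sigma_{g\circ K}(h\circ K)=\lambda_g(\sigma_x)^{-}\circ h\circ K$ of \cref{cosmod}, each generator $\sigma_{g\circ K}$ acts on $\mathcal{G}/K$ as left $\circ$-multiplication by $\lambda_g(\sigma_x)^{-}$; since these elements generate $(\mathcal{G},\circ)$, the natural permutation action of $\mathcal{G}=\mathcal{G}(X)$ on $X=\mathcal{G}/K$ coincides with left $\circ$-multiplication, whose stabilizer of the trivial coset is precisely $K$ (and likewise $H$ for $Y$). Now $p$ is a cycle set homomorphism, hence equivariant: $p(g\cdot w)=\bar{p}(g)\cdot p(w)=g\cdot p(w)$ for all $g\in\mathcal{G}$ and $w\in X$, after the identification $\bar{p}=\mathrm{id}$. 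Evaluating at $w=x$ and using $p(x)=H$ gives $p(g\circ K)=g\circ H$; in particular the stabilizer $K$ of $x$ is contained in the stabilizer $H$ of $p(x)$, so $K\le H$, and $p$ is exactly the canonical projection $z\circ K\mapsto z\circ H$. The main obstacle is the book-keeping of base points: one must arrange the \cref{cosmod} identification of $Y$ so that $p(x)$ is the trivial coset, which is achieved precisely by taking $p(x)$ as base point and $\sigma_{p(x)}$ (corresponding to $\sigma_x$) as base element for $Y$.

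For the converse, start from a left brace $\mathcal{G}$ with a transitive cycle base containing $\sigma_x$, together with core-free subgroups $K\le H\le\mathcal{G}_{\sigma_x}$ of $(\mathcal{G},\circ)$. By \cref{cosmod}, $X:=C_{\mathcal{G},K,\sigma_x}$ and $Y:=C_{\mathcal{G},H,\sigma_x}$ are indecomposable cycle sets with $\mathcal{G}(X)\cong\mathcal{G}\cong\mathcal{G}(Y)$. The projection $\pi:z\circ K\mapsto z\circ H$ is well defined since $K\le H$, surjective, and a cycle set homomorphism because the operation formula $\lambda_g(\sigma_x)^{-}\circ h\circ(\cdot)$ is compatible with passing from $K$-cosets to $H$-cosets. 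Finally $\pi$ is a covering: since $\sigma_{g\circ K}$ and $\sigma_{g\circ H}=\bar{\pi}(\sigma_{g\circ K})$ both correspond, under the \cref{cosmod} identifications, to the same element $\lambda_g(\sigma_x)^{-}\in\mathcal{G}$, the induced map $\bar{\pi}:\mathcal{G}(X)\to\mathcal{G}(Y)$ is the identity of $\mathcal{G}$, hence an isomorphism. This exhibits every covering of indecomposable cycle sets, up to isomorphism, in the stated form.
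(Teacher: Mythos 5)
Your proof is correct and takes essentially the same route as the paper: the paper's own proof is a one-line appeal to Theorem 3 and Corollary 2 of \cite{rump2020}, i.e.\ precisely to \cref{cosmod} together with the coset-projection description of coverings, which is exactly what you reconstruct (identify $\mathcal{G}(Y)$ with $\mathcal{G}$ along $\bar{p}$, realize both cycle sets via \cref{cosmod} over the same brace and base element, and use $\bar{p}$-equivariance of $p$ to recognize it as $z\circ K\mapsto z\circ H$). Your only implicit reliance is on the refined form of the converse of \cref{cosmod} --- that the identification can be arranged so that $K$ is the stabilizer of the chosen point $x$, that $x$ maps to the trivial coset, and that the $\mathcal{G}$-action becomes left $\circ$-multiplication --- and you justify the pieces of this that your equivariance argument needs from the defining formula, so there is no gap.
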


\begin{proof}
    It follows by \cite[Theorem 3 and Corollary 2]{rump2020}.
\end{proof}

\begin{prop}[Section 3, \cite{rump2023primes}]\label{epiid}
     Let $X,Y$ be indecomposable cycle sets and $p:X\longrightarrow Y$ an empimorphism from $X$ to $Y$. Then, the set $I:=\{g|g\in \mathcal{G}(X), \quad p(g(x))=p(x)\quad \forall \quad x\in X \}$ is an ideal of $\mathcal{G}(X)$.\\
     Conversely, if $X$ is an indecomposable cycle set and $I$ an ideal of $\mathcal{G}(X)$, the $I$-orbits of $X$ induces a cycle set structure, which we indicate by $X/I$, that give rise to a canonical cycle set epimorphism $p:X\longrightarrow X/I$.
\end{prop}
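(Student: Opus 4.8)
The plan is to treat the two directions by passing to the permutation left brace $\mathcal{G}(X)$ and exploiting the brace epimorphism induced by a cycle set epimorphism. The pivotal observation in both directions will be that a cycle set epimorphism is equivariant with respect to the relevant permutation actions.

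For the direct implication, first I would record the equivariance of $p$: namely $p(g(x)) = \bar{p}(g)(p(x))$ for all $g\in\mathcal{G}(X)$ and $x\in X$, where $\bar{p}:\mathcal{G}(X)\to\mathcal{G}(Y)$, $\sigma_x\mapsto\sigma_{p(x)}$, is the induced left brace epimorphism. This is immediate on the generators $\sigma_z$ from the homomorphism property $p(z\cdot x)=p(z)\cdot p(x)$, and it extends to arbitrary $g$ by multiplicativity of $\bar p$ and to inverses by substituting $g^{-1}(x)$ for $x$. Using that $p$ is surjective, an element $g$ lies in $I$ exactly when $\bar p(g)$ fixes every point of $Y$, i.e.\ when $\bar p(g)=\id_Y$; hence $I=\ker\bar p$. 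Since the kernel of a left brace homomorphism is an ideal, $I$ is an ideal of $\mathcal{G}(X)$, as claimed.

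For the converse, let $I$ be an ideal of $\mathcal{G}(X)$ and let $\pi:\mathcal{G}(X)\to\mathcal{G}(X)/I$ be the canonical left brace epimorphism onto the quotient brace. I would declare $x\approx x'$ whenever $x,x'$ lie in the same $I$-orbit of $X$, which is an equivalence relation, and then prove that $\approx$ is a congruence of $(X,\cdot)$. Compatibility in the second argument, $y\approx y'\Rightarrow x\cdot y\approx x\cdot y'$, follows from normality of $I$ in $(\mathcal{G}(X),\circ)$: if $y'=h(y)$ with $h\in I$ then $\sigma_x(y')=(\sigma_x\circ h\circ\sigma_x^{-1})(\sigma_x(y))$ and $\sigma_x\circ h\circ\sigma_x^{-1}\in I$. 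Compatibility in the first argument, $x\approx x'\Rightarrow x\cdot y\approx x'\cdot y$, is where the left ideal condition enters. Writing $x'=g(x)$ with $g\in I$, the key identity is one of the form $\sigma_{g(x)}=\lambda_g(\sigma_x)$, expressing that the $\lambda$-action of the brace $\mathcal{G}(X)$ on the cycle base $\{\sigma_x\mid x\in X\}$ intertwines the permutation action of $\mathcal{G}(X)$ on $X$; this can be read off the explicit description in \cref{cosmod}. Applying $\pi$ and using that brace homomorphisms are $\lambda$-equivariant together with $\pi(g)=0$, I get $\pi(\sigma_{x'})=\lambda_{\pi(g)}(\pi(\sigma_x))=\pi(\sigma_x)$, so $\sigma_{x'}\circ\sigma_x^{-1}\in\ker\pi=I$; hence $x'\cdot y=(\sigma_{x'}\circ\sigma_x^{-1})(x\cdot y)\approx x\cdot y$. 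Transitivity of $\approx$ then yields the full congruence property, so the quotient $X/I:=X/{\approx}$ carries a cycle set structure for which the canonical projection $p:X\to X/I$ is a surjective cycle set homomorphism, that is, an epimorphism.

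The main obstacle is the identity $\sigma_{g(x)}=\lambda_g(\sigma_x)$ relating the brace-theoretic $\lambda$-action to the permutation action on $X$: once it is in place, both directions reduce to the $\lambda$-equivariance of brace homomorphisms and the coincidences $I=\ker\bar p$ and $I=\ker\pi$. I would therefore isolate this identity first, verifying it on the generators $\sigma_z$ through \cref{cosmod} and \cref{action} and extending it by the homomorphism property of $\lambda$. It is worth noting that the argument is robust to the precise form of this identity (e.g.\ to the placement of inverses), since applying $\pi$ collapses $\lambda_{\pi(g)}$ to $\lambda_0=\id$ in all cases and still forces $\sigma_{g(x)}\circ\sigma_x^{-1}\in I$; after this, the remaining verifications that $\approx$ is a congruence and that the projection is an epimorphism are routine.
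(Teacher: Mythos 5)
Your proposal cannot be compared against an internal argument, because the paper gives no proof of \cref{epiid}: the proposition is imported verbatim from Section 3 of \cite{rump2023primes}, with the citation serving as the proof. Judged on its own, your reconstruction is correct and uses exactly the machinery the paper has set up. In the forward direction, identifying $I$ with $Ker(\bar{p})$ via the equivariance $p(g(x))=\bar{p}(g)(p(x))$ (proved on generators and extended multiplicatively) and then invoking the paper's stated fact that kernels of left brace homomorphisms are ideals is sound; note the small point that ``$\bar{p}(g)$ is the identity permutation of $Y$'' and ``$\bar{p}(g)=0$ in the brace $\mathcal{G}(Y)$'' coincide because the additive and multiplicative identities of a left brace agree. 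In the converse, your key identity should, in the paper's conventions, carry inverses: the map that extends to a brace homomorphism is $\theta\colon x\mapsto \sigma_x^{-1}$, and the correct generator relation is $\sigma_{g(x)}^{-1}=\lambda_g(\sigma_x^{-1})$; the version $\sigma_{g(x)}=\lambda_g(\sigma_x)$ is not literally right, since $\lambda_g$ is an automorphism of the additive group only and does not commute with multiplicative inversion. You explicitly flag this, and your robustness remark is valid: applying the quotient map $\pi$, using $\lambda$-equivariance of brace homomorphisms and $\pi(g)=0$, either form yields $\sigma_{g(x)}\circ\sigma_x^{-1}\in I$, which is all the congruence argument needs. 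The remaining steps (normality of $I$ giving compatibility in the second argument and injectivity of the induced left multiplications on the quotient, transitivity assembling the two one-sided compatibilities into a full congruence) are indeed routine, so your proof stands as a correct, self-contained substitute for the external reference.
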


From now on, if $X,Y$ are indecomposable cycle sets and $p:X\longrightarrow Y$ an empimorphism from $X$ to $Y$ the ideal induced by $p$ as in the previous result will be indicated by $I(p)$. If $X$ is an indecomposable cycle set and $I$ an ideal of $\mathcal{G}(X)$, the induced epimorphism from $X$ to $X/I$ will be indicated by $p_I$. 

\begin{prop}[Theorem 1, \cite{rump2023primes}]\label{fattepi}
    Let $X,Y$ be indecomposable cycle sets and $p:X\longrightarrow Y$ an epimorphism from $X$ to $Y$. Up to isomorphism, there exist a unique factorization $p=qp_I$ for a suitable ideal $I$ of $\mathcal{G}(X)$, where $q$ is a covering of cycle sets. In particular, $Y$ is an epimorphic image of $X/I$.
\end{prop}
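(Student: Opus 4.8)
The plan is to take $I:=I(p)$ as the ideal produced by \cref{epiid}, to factor $p$ through the canonical epimorphism $p_I\colon X\longrightarrow X/I$, and to verify that the induced map on the second factor is a covering; uniqueness will then follow from the bijectivity of the ideal/epimorphism correspondence of \cref{epiid}.

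First I would record the one structural fact that drives everything: for any cycle set epimorphism $f\colon A\longrightarrow C$ the induced brace homomorphism $\bar f\colon\mathcal G(A)\longrightarrow\mathcal G(C)$ is equivariant, i.e. $f(g(a))=\bar f(g)(f(a))$ for all $g\in\mathcal G(A)$ and $a\in A$. This is checked on the generators $\sigma_a$, where it is exactly $f(a\cdot a')=f(a)\cdot f(a')$, and extended multiplicatively. Together with surjectivity of $f$ this yields $I(f)=\ker\bar f$, since $\bar f(g)=\id$ is equivalent to $f(g(a))=f(a)$ for all $a$. In particular $\ker\bar p=I(p)=I$. I would also note the elementary inclusion $I\subseteq\ker\bar{p_I}$: if $g\in I$ then $g(x)$ lies in the $I$-orbit of $x$ for every $x$, so $p_I(g(x))=p_I(x)$, and hence $\bar{p_I}(g)=\id$.

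To establish existence, I would first observe that since every $g\in I$ satisfies $p(g(x))=p(x)$, the map $p$ is constant on $I$-orbits and therefore descends to a cycle set epimorphism $q\colon X/I\longrightarrow Y$ with $q\,p_I=p$. The real content is then that $q$ is a covering, i.e. that $\bar q\colon\mathcal G(X/I)\longrightarrow\mathcal G(Y)$ is a brace isomorphism. Surjectivity is immediate from surjectivity of $q$. For injectivity I would take $k\in\ker\bar q$ and, using that $\bar{p_I}$ is surjective, write $k=\bar{p_I}(g)$; then equivariance of $p_I$ and the identity $q\,p_I=p$ give, for every $x$, the chain $p(g(x))=q(\bar{p_I}(g)(p_I(x)))=\bar q(k)(p(x))=p(x)$, so that $g\in I(p)=I$. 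By the inclusion recorded above, $k=\bar{p_I}(g)=\id$, so $\ker\bar q$ is trivial and $q$ is a covering. The closing assertion that $Y$ is an epimorphic image of $X/I$ is then just the existence of $q$.

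The main obstacle is the uniqueness statement, and this is where the full strength of \cref{epiid} (i.e. \cite{rump2023primes}) enters: one must invoke that the two constructions of \cref{epiid} are mutually inverse, equivalently that $I(p_J)=J$ for every ideal $J$ of $\mathcal G(X)$. Granting this, suppose $p=q'\,p_{I'}$ with $q'$ a covering. Then $\bar{q'}$ is injective, and applying $\bar{(-)}$ to this factorization gives $\ker\bar p=\ker\bar{p_{I'}}$; by the equivariance fact this reads $I=I(p_{I'})=I'$. Thus the ideal occurring in any such factorization is forced to equal $I(p)$, and since $p_I$ is surjective the equality $q'\,p_I=p=q\,p_I$ forces $q'=q$ on points. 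Hence the factorization is unique up to isomorphism.
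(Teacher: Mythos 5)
Your proposal cannot be matched against an internal argument, because the paper gives none: \cref{fattepi} is imported wholesale from Rump's work (it is stated as Theorem~1 of \cite{rump2023primes}), so what you have written is a genuine reconstruction rather than a parallel of the paper's proof. The existence half of your reconstruction is correct and complete relative to what the paper does state: the equivariance identity $f(g(a))=\bar f(g)(f(a))$, its consequence $I(f)=\ker\bar f$ for surjective $f$, the descent of $p$ to $q\colon X/I\longrightarrow Y$, and the kernel chase showing that $\ker\bar q$ is trivial are all sound, and they use only \cref{epiid} together with the fact, recorded in Section~1, that a cycle set epimorphism induces a left brace epimorphism of the permutation braces.

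The uniqueness half, however, contains the gap you yourself flag: you ``grant'' that $I(p_J)=J$ for every ideal $J$, and this is not part of \cref{epiid} as stated (that proposition records the two constructions but nowhere asserts they are mutually inverse), nor is it obvious; unwinding definitions, $I(p_J)$ is the kernel of the action of $\mathcal{G}(X)$ on the set of $J$-orbits, i.e.\ the normal core of $J\circ\mathcal{G}(X)_x$ for a point stabilizer $\mathcal{G}(X)_x$, and there is no visible reason this cannot exceed $J$. The good news is that the granted claim is stronger than what the statement needs, and your own intermediate steps already suffice. From $p=q'\,p_{I'}$ with $q'$ a covering you correctly derive $I(p_{I'})=\ker\bar p=I(p)$. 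Now note that $I'\subseteq I(p_{I'})$, while by definition every element of $I(p_{I'})$ maps each point into its $I'$-orbit; hence the $I'$-orbits and the $I(p_{I'})$-orbits coincide as partitions of $X$. Consequently $X/I'$ and $X/I(p)$ are the same cycle set (the operation on orbits is forced by $[x]\cdot[y]=[x\cdot y]$), so $p_{I'}=p_{I(p)}$ as maps, and surjectivity of $p_{I(p)}$ then forces $q'=q$. This proves uniqueness of the factorization itself---which is what \cref{fattepi} asserts---without deciding whether the ideal realizing it is unique. I would replace the ``granting this'' step by this orbit argument; as written, that step rests on a claim you neither prove nor can cite from the results available in the paper.
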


\smallskip

\section{Cycle sets of finite primitive level}

After introducing the class of cycle sets of finite primitive level, in this section we give a characterization of cycle sets of finite primitive level by its associated permutation group. 
%As a first  application, we show that the displacements group of a latin cycle set always acts transitively on the underlying set. Finally, we give a criterion to easily detect simple cycle sets having prime-squared order.

\medskip

We start by definition of cycle sets of finite primitive level, given the first time in \cite{cedo2021constructing} in terms of solutions.

\begin{defin}
A cycle set $X$ is said to be \emph{primitive} if $\mathcal{G}\left(X\right)$ acts primitively on $X$. Moreover, we say that a finite indecomposable cycle set $X$ has \emph{primitive
level $k$}, and we will write $fpl(X)=k$, if $k$ is the biggest positive integer such that 
\begin{enumerate}
    \item[(1)] there exist cycle sets $X_1 = X, X_2, \ldots  ,X_k$, with $|X_i| > |X_{i+1}| > 1$, for every $1\leq  i\leq k-1$;
    \item[(2)] there exists an epimorphism of cycle sets $p_{i+1}: X_{i}\rightarrow X_{i+1}$, for every $1\leq  i\leq k-1$;
    \item[(3)] $X_k$ is primitive.
\end{enumerate}
\end{defin}

Clearly, every indecomposable cycle set having prime size is primitive and, by the main theorem of \cite{cedo2020primitive}, there are no other primitive cycle sets.

\begin{rem}\label{ossfixed}
As observed in \cite[Corollary 5.5]{castelli2022simplicity}, if $X$ is an indecomposable cycle set of finite primitive level and $x$ an arbitrary element of $X$, then $\sigma_x$ can not have a fixed point $y$, i.e., an element $y\in X$ such that $x\cdotp y=y$. However, this condition is not sufficient (see comment after \cite[Corollary 5.7]{castelli2022simplicity}).
\end{rem}

The following lemma is a simple but useful result to state when an indecomposable cycle set has finite primitive level.

\begin{lemma}\label{crit}
    Let $X$ be an indecomposable cycle set. Then, $X$ has finite primitive level if and only if there exist a trivial indecomposable cycle set $Y$ and an epimorphism $p:X\longrightarrow Y$. 
\end{lemma}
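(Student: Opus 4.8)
The plan is to characterize finite primitive level via the existence of an epimorphism onto a \emph{trivial indecomposable} cycle set, so the strategy splits into two implications.

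\textbf{Sufficiency (the easy direction).} Suppose there is an epimorphism $p: X \longrightarrow Y$ with $Y$ a trivial indecomposable cycle set. First I observe that a trivial indecomposable cycle set is primitive: by the Example, a trivial cycle set has $x \cdot y = \alpha(y)$ for a single permutation $\alpha$, so $\mathcal{G}(Y) = \langle \alpha \rangle$ is cyclic and acts transitively (by indecomposability); a transitive cyclic group of prime-power... more precisely, a transitive abelian group acts regularly, and a regular action of a cyclic group is primitive exactly when $|Y|$ is prime. Here I must be slightly careful: transitivity of a cyclic group on a set of size $n$ makes it a single $n$-cycle, and such an action is primitive iff $n$ is prime. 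So the cleanest route is to pass to a prime-size retract or quotient of $Y$: since $Y$ is trivial indecomposable of some size $m>1$, it maps onto a trivial cycle set of prime size dividing $m$ (quotient the cyclic group $\langle\alpha\rangle$ by its maximal subgroup), which is primitive. Composing the chain $X = X_1 \to \cdots \to Y \to (\text{prime-size quotient})$ and discarding repeated sizes yields a decreasing chain ending in a primitive cycle set, so $fpl(X) \geq 1$ is finite, witnessed concretely.

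\textbf{Necessity (the substantive direction).} Suppose $fpl(X) = k$ is finite, so there is a chain $X = X_1 \to X_2 \to \cdots \to X_k$ with $X_k$ primitive of size $> 1$. By the remark after the definition, $X_k$ primitive indecomposable forces $|X_k|$ prime (main theorem of \cite{cedo2020primitive}). The key claim is that an indecomposable cycle set of \emph{prime} size is necessarily trivial. This is where I expect the main work: I would argue via \cref{cosmod}, writing $X_k \cong C_{B,K,a_1}$ with $\mathcal{G}(X_k) \cong B$; since $|X_k|$ is prime and $X_k$ is indecomposable, the transitive group $\mathcal{G}(X_k)$ of prime degree $p$ must act regularly (a transitive group of prime degree that is also \emph{primitive}—which it is—has a normal Sylow structure, but more directly: an indecomposable cycle set of prime size has cyclic $\mathcal{G}(X_k)$ acting regularly, hence all $\sigma_x$ coincide up to the group structure, making the cycle set trivial). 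Concretely, indecomposability plus prime size gives $|\mathcal{G}(X_k)|$ divisible by $p$ with a transitive action, and the socle/displacement analysis shows $Dis(X_k)$ is trivial, i.e. all $\sigma_x \sigma_y^{-1} = \id$, which is exactly the condition that $X_k$ is a trivial cycle set.

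\textbf{Assembling the proof.} Once $X_k$ is known to be trivial and indecomposable, composing the epimorphisms $X = X_1 \to X_2 \to \cdots \to X_k$ produces a single epimorphism $p : X \to X_k$ onto a trivial indecomposable cycle set, completing the necessity direction. The hardest step is the claim that indecomposable prime-size cycle sets are trivial; I would prove it by combining the classification of primitive indecomposable cycle sets from \cite{cedo2020primitive} (forcing prime size) with a direct verification that the only indecomposable cycle set structures on a set of prime size are the trivial ones, using that the regular cyclic action leaves no room for a nontrivial retraction and that $Dis(X_k)$ must vanish. Everything else reduces to the observation that trivial indecomposable cycle sets admit prime-size quotients that are primitive, which handles sufficiency.
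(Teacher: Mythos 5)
Your overall skeleton is the right one and matches the intended (unwritten) ``straightforward'' argument: compose the chain of epimorphisms to reach the primitive term $X_k$, use the main theorem of \cite{cedo2020primitive} to get that $|X_k|$ is prime, conclude that $X_k$ is trivial, and conversely project a trivial indecomposable cycle set of size $m>1$ onto one of prime size $q$ dividing $m$, which is primitive. The sufficiency half of your proof is correct as written.

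There is, however, a genuine gap in the necessity half, at exactly the step you flag as ``the main work'': your proposed justification that an indecomposable cycle set of prime size is trivial does not hold together. The assertion that a transitive group of prime degree ``has a normal Sylow structure'' is false in general ($S_p$, $A_p$, or $PSL_2(11)$ acting on $11$ points are transitive of prime degree with no normal Sylow $p$-subgroup); what is true is Galois's theorem for \emph{solvable} transitive groups of prime degree, and the solvability of $\mathcal{G}(X)$ for involutive solutions is itself a nontrivial theorem of Etingof--Schedler--Soloviev that you never invoke. Likewise, ``the socle/displacement analysis shows $Dis(X_k)$ is trivial'' is not an argument: since $Dis(X_k)$ is normal in $\mathcal{G}(X_k)$ and $|X_k|$ is prime, its orbits form a block system, so $Dis(X_k)$ is either trivial or transitive --- and ruling out the transitive case is precisely the content of the claim you are trying to prove (note that the displacement group \emph{is} transitive for, e.g., simple cycle sets of size $p^2$, so nothing formal forces it to vanish; primality must enter through solvability). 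The clean fix is simply to cite \cite[Theorem 2.13]{etingof1998set}, which states exactly that the unique indecomposable cycle set of prime size $p$ is the trivial one on $\mathbb{Z}/p\mathbb{Z}$ given by $x\cdot y=y+1$; this is how the paper itself uses the fact elsewhere (e.g., in the proofs of \cref{carattfinprim} and \cref{sqfree}). With that citation replacing your sketch, your proof is complete and coincides with the intended one.
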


\begin{proof}
    Straightforward.
\end{proof}

%Before giving the main result of this section, we give a preliminary lemma. This result essentially is a part of \cite[Proposition 4.3]{cedo2021constructing}; in addition, we show the equality $Dis(X)=\mathcal{G}(X)^2$ for an indecomposable cycle set $X$. 

Before giving the main result of this section, we give a preliminary lemma. This result is essentially a mixture between  \cite[Proposition 4.3]{cedo2021constructing} and the results contained in \cite[Section 1]{Ru220}.

\begin{lemma}\label{preldis}
    Let $X$ be an indecomposable cycle set. Then, the ideal $\mathcal{G}(X)^2$ of the left brace $\mathcal{G}(X)$ is equal to $Dis(X)$. Moreover, the factor group $\mathcal{G}(X)/Dis(X)$, regarded as left brace, is a trivial left brace with cyclic multiplicative (and hence additive) group.
\end{lemma}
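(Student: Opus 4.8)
The plan is to unwind both claims through the explicit brace structure on $\mathcal{G}(X)$ coming from \cref{esbrace}, item 2). Recall the surjective left brace homomorphism $\bar\theta:\mathbb{Z}^X\to\mathcal{G}(X)$ with $x\mapsto\sigma_x^{-1}$ and $\operatorname{Ker}(\bar\theta)=Soc(\mathbb{Z}^X)$. The key is to identify the two operations on $\mathcal{G}(X)$ concretely: $\circ$ is ordinary composition of permutations, and the additive structure $+$ is the abelian group transported from $\mathbb{Z}^X$ through $\bar\theta$. Under this transport the commutator-type element $\sigma_x^{-1}*\sigma_y^{-1}=-\sigma_x^{-1}+\sigma_x^{-1}\circ\sigma_y^{-1}-\sigma_y^{-1}$ should compute, after applying the $\lambda$-action identities of \cref{action}, to something expressible in terms of products $\sigma_a\sigma_b^{-1}$. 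Since $\mathcal{G}(X)^2$ is by definition the additive subgroup generated by all $a*b$, the first step is to show the generators $\sigma_x^{-1}*\sigma_y^{-1}$ generate exactly the same subgroup as the displacement generators $\{\sigma_x\sigma_y^{-1}\}$.

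Concretely, I would first prove $Dis(X)\subseteq\mathcal{G}(X)^2$ and then the reverse inclusion. For one direction, using $\lambda_a(b)=-a+a\circ b$ one rewrites $a*b = \lambda_a(b)-b = \lambda_a(b)\circ(-b)$ in multiplicative notation, i.e. $a*b=\lambda_a(b)\,b^{-1}$ where the product is composition; taking $a=\sigma_x^{-1}$, $b=\sigma_y^{-1}$ and using that $\lambda$ is a homomorphism into $\operatorname{Aut}(\mathcal{G}(X),+)$ this should produce precisely a product of the form $\sigma_u\sigma_v^{-1}$, placing every $a*b$ inside $Dis(X)$; since $Dis(X)$ is a (normal) subgroup this gives $\mathcal{G}(X)^2\subseteq Dis(X)$. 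For the reverse, I would exhibit each generator $\sigma_x\sigma_y^{-1}$ of $Dis(X)$ as lying in the additive subgroup generated by the $*$-products, which is where \cite[Proposition 4.3]{cedo2021constructing} and \cite[Section 1]{Ru220} enter — these presumably already record the equality $\mathcal{G}(X)^2=Dis(X)$ or supply the identity converting displacement generators into sums of $*$-products. The honest obstacle here is bookkeeping between the additive and multiplicative notations: the same element written as $a*b$ multiplicatively versus additively must be tracked carefully, and one must verify that the subgroup generated by $*$-products (a priori only an additive subgroup) actually coincides with the multiplicatively-generated $Dis(X)$, which works because $\mathcal{G}(X)^2$ is an ideal, hence closed under $\circ$ as well.

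For the second claim, once $\mathcal{G}(X)^2=Dis(X)$ is established, the quotient $\mathcal{G}(X)/\mathcal{G}(X)^2$ is immediately a trivial left brace: by the general property of the ideal $B^2$ (Rump, \cite[Corollary of Proposition 6]{rump2007braces}), the quotient $B/B^2$ always satisfies $a+b=a\circ b$, i.e. its two operations coincide, which is exactly the definition of a trivial brace. So I only need to cite that general fact and specialize it to $B=\mathcal{G}(X)$. It then remains to see that the additive (equivalently multiplicative) group of this trivial quotient is cyclic.

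The cyclicity is where indecomposability is used. Since $X$ is indecomposable, $\mathcal{G}(X)$ is generated, as a group, by the single conjugacy-type orbit of $\sigma_x$'s, and all the $\sigma_x$ become equal modulo $Dis(X)$ — indeed $\sigma_x\sigma_y^{-1}\in Dis(X)$ for all $x,y$ by definition. Hence in $\mathcal{G}(X)/Dis(X)$ every generator $\sigma_x$ collapses to one and the same element $g$, so the quotient is generated by the single element $g$ and is therefore cyclic. I would phrase this as: the images $\overline{\sigma_x}$ are all equal because their pairwise ratios lie in $Dis(X)=\mathcal{G}(X)^2$, and since the $\sigma_x$ generate $\mathcal{G}(X)$ the images generate the quotient, giving a cyclic group with the additive and multiplicative structures agreeing. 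The only subtlety to double-check is that transitivity (indecomposability) guarantees there is a single generating class rather than several independent ones, which is exactly what makes the quotient monogenic rather than merely abelian.
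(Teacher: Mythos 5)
The paper itself gives no proof of this lemma at all: it is stated as ``essentially a mixture'' of \cite[Proposition 4.3]{cedo2021constructing} and the results of \cite[Section 1]{Ru220}, so your proposal has to stand on its own, and it does not. The pivotal step is false: in a left brace the additive difference $\lambda_a(b)-b$ is \emph{not} the multiplicative quotient $\lambda_a(b)\circ b^{-1}$, so the rewriting $a*b=\lambda_a(b)-b=\lambda_a(b)\circ(-b)=\lambda_a(b)\,b^{-1}$ conflates the two group structures at exactly the point where they must be kept apart. Indeed $\lambda_a(b)\circ b^{-1}=\lambda_a(b)+\lambda_{\lambda_a(b)}(b^{-1})$, and $\lambda_{\lambda_a(b)}(b^{-1})=-b$ is not a brace identity. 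You can see the failure inside this very paper: in the permutation left brace of Rump's size-$8$ cycle set (Section 3), the correct formulas are $g*\sigma_y^{-1}=\lambda_g(\sigma_y^{-1})-\sigma_y^{-1}=\sigma_{g(y)}^{-1}-\sigma_y^{-1}$ (purely additive) and $\sigma_u\sigma_v^{-1}=\sigma_{u\cdot v}^{-1}-\sigma_{u\cdot u}^{-1}$ (using $\sigma_u=-\sigma_{u\cdot u}^{-1}$ and $\lambda_{\sigma_u}(\sigma_v^{-1})=\sigma_{u\cdot v}^{-1}$); taking $a=\sigma_1^{-1}$, $b=\sigma_0^{-1}$ one gets $a*b=\sigma_4^{-1}-\sigma_0^{-1}$, while $\lambda_a(b)\circ b^{-1}=\sigma_4^{-1}\sigma_0=\sigma_4^{-1}-\sigma_5^{-1}$, and these differ because $\sigma_0\neq\sigma_5$. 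The actual bridge between $*$-products and displacement elements is the matching of the two families of additive differences above, and that matching is where the real hypotheses enter: non-degeneracy (to solve $u\cdot u=b$, i.e.\ invert the squaring map) and transitivity (to find $g$ with $g(u\cdot u)=u\cdot v$).

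There are two further structural gaps. First, even if every $a*b$ did lie in $Dis(X)$, you could not conclude $\mathcal{G}(X)^2\subseteq Dis(X)$ the way you do: $\mathcal{G}(X)^2$ is by definition the \emph{additive} span of the $*$-products, so this inclusion needs $Dis(X)$ to be closed under $+$, which is a nontrivial fact (one shows $Dis(X)$ is a left ideal, i.e.\ $\lambda$-invariant, and then uses $g+h=g\circ\lambda_g^{-1}(h)$); your bookkeeping remark that ``$\mathcal{G}(X)^2$ is an ideal, hence closed under $\circ$'' settles only the \emph{opposite} inclusion $Dis(X)\subseteq\mathcal{G}(X)^2$. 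Second, you locate the use of indecomposability in the cyclicity of the quotient, but that part needs no transitivity at all: $\sigma_x\sigma_y^{-1}\in Dis(X)$ by definition, so all the $\sigma_x$ collapse to a single coset for \emph{any} cycle set. Indecomposability is instead essential for the equality $\mathcal{G}(X)^2=Dis(X)$ itself, namely to write $\sigma_{u\cdot v}^{-1}-\sigma_{u\cdot u}^{-1}$ as $g*\sigma_{u\cdot u}^{-1}$ with $g\in\mathcal{G}(X)$ transitive enough to send $u\cdot u$ to $u\cdot v$. Without it the lemma is simply false: for the decomposable cycle set $X=\{1,2,3\}$ with $\sigma_1=\sigma_2=\mathrm{id}$ and $\sigma_3=(1\,2)$ one has $Dis(X)=\mathcal{G}(X)\cong\mathbb{Z}/2\mathbb{Z}$ but $\mathcal{G}(X)^2=\{0\}$, since the brace $\mathcal{G}(X)$ is trivial. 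Finally, the half of the argument you defer to the references with ``presumably'' is precisely the substantive half; given that the paper treats the lemma as known this is defensible scholarship, but it is not a proof.
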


%\begin{proof}
%    Let $x,y\in X$. By non-degeneracy of $X$ there exist $z\in X$ such that $y=z\cdotp z$ and, using the property of the left brace $G(X)$, we obtain that $z^{-}=-\lambda_{z^{-}}(z)=-y$. Now, using the left brace epimorphism $\bar{\theta}$ from $G(X)$ to $\mathcal{G}(X)$ induced by the assignment $a\mapsto \sigma_a^{-1}$, for every $a\in X$, we obtain that 
%    $$\sigma^{-1}_x-\sigma^{-1}_y=\bar{\theta}(x-y)=\bar{\theta}(x+z^-)=\bar{\theta}(z^-\circ \lambda_{z}(x))=\bar{\theta}(z^-)\circ \bar{\theta}(\lambda_{z}(x))=\sigma_z\sigma^{-1}_{\lambda_z(x)}. $$
%    Now, being $X$ indecomposable, $\mathcal{G}(X)^2 $ is the additive group generated by the differences $\sigma^{-1}_x-\sigma^{-1}_y$ with $x,y\in X$ (see the proof of \cite[Proposition 4.3]{cedo2021constructing}), therefore it follows the inclusion $\mathcal{G}(X)^2\subseteq Dis(X)$. Using again the previous series of equalities, we can easily obtain the reversed inclusion. Therefore by \cite[Proposition 4.3]{cedo2021constructing} the left brace  $\mathcal{G}(X)/Dis(X)$ is a trivial left brace with cyclic multiplicative (and hence additive) group.
%\end{proof}

Now we are able to show the main result of the section.

\begin{theor}\label{carattfinprim}
    Let $X$ be an indecomposable cycle set. Then, $X$ has finite primitive level if and only if $Dis(X)$ does not acts transitively on $X$.
\end{theor}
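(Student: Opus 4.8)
The plan is to translate the statement into the language of epimorphisms by means of \cref{crit}, and to exploit the identification $Dis(X)=\mathcal{G}(X)^2$ furnished by \cref{preldis}. The pivotal object throughout is the set $I:=Dis(X)$, which is an ideal of the left brace $\mathcal{G}(X)$ precisely because it coincides with the standard ideal $\mathcal{G}(X)^2$, together with the canonical epimorphism $p_I:X\longrightarrow X/I$ produced by \cref{epiid}. Both implications will then reduce to comparing the $Dis(X)$-orbits on $X$ with the fibres of epimorphisms onto trivial indecomposable cycle sets.

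For the implication ``$Dis(X)$ not transitive $\Rightarrow$ finite primitive level'', I would form $X/I$ with $I=Dis(X)$. Being an epimorphic image of the indecomposable $X$, the quotient is indecomposable, and since $Dis(X)$ is not transitive it has more than one orbit, so $|X/I|>1$. The key computation is that $X/I$ is \emph{trivial}: for all $x,y\in X$ the element $\sigma_y\sigma_x^{-1}$ lies in $Dis(X)=I$, so $\sigma_y(z)$ and $\sigma_x(z)$ lie in the same $I$-orbit for every $z$, whence $\sigma_{[x]}([z])=[x\cdotp z]=[\sigma_x(z)]=[\sigma_y(z)]=\sigma_{[y]}([z])$ and all left multiplications on $X/I$ agree. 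Applying \cref{crit} to $p_I:X\longrightarrow X/I$ then yields finite primitive level.

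For the converse, I would start from an epimorphism $p:X\longrightarrow Y$ onto a trivial indecomposable cycle set $Y$ given by \cref{crit}, and pass to the induced left brace epimorphism $\bar p:\mathcal{G}(X)\longrightarrow\mathcal{G}(Y)$, $\sigma_x\mapsto\sigma_{p(x)}$. Since $Y$ is trivial, \cref{preldis} gives $\mathcal{G}(Y)^2=Dis(Y)=\{1\}$; as $\bar p$ is a surjective brace homomorphism it sends $\mathcal{G}(X)^2$ onto $\mathcal{G}(Y)^2$, so $Dis(X)=\mathcal{G}(X)^2\subseteq\ker\bar p$. Combining this with the equivariance relation $p(g(z))=\bar p(g)(p(z))$, verified on the generators $\sigma_x^{\pm 1}$ via the homomorphism property of $p$, every $g\in Dis(X)$ satisfies $p(g(z))=p(z)$ for all $z$, i.e. $Dis(X)$ stabilises each fibre of $p$. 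Because $Y$ has more than one element, $p$ has at least two fibres, so the $Dis(X)$-orbits cannot cover $X$ and $Dis(X)$ is not transitive.

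The obstacle I anticipate is bookkeeping rather than conceptual. First, one must confirm that $X/I$ genuinely carries the induced (here trivial) structure and that $p_I$ intertwines the two $\mathcal{G}$-actions, which is exactly the content of \cref{epiid} and the fact that the $I$-orbits define a congruence. Second, and more delicate, is controlling the size: the converse direction applies \cref{crit} to a target with $|Y|>1$, so I would record that the trivial indecomposable cycle set produced by finite primitive level is genuinely nontrivial (equivalently, run the whole argument under the standing hypothesis $|X|>1$), since a singleton target would make the criterion vacuous. Once this is settled, the equivalence follows formally from \cref{preldis,crit}.
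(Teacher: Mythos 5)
Your proof is correct, and its overall skeleton---reduce both implications, via \cref{crit} and \cref{preldis}, to comparing the $Dis(X)$-orbits with the fibres of an epimorphism onto a trivial indecomposable quotient---is the same as the paper's; the differences are in which packaged results carry each direction. Your converse direction (non-transitivity of $Dis(X)$ implies finite primitive level) is essentially the paper's argument: where you invoke \cref{epiid} to form $X/Dis(X)$ and then verify that all left multiplications coincide, the paper builds the same trivial cycle set by hand on the orbit set $\Delta$ of $Dis(X)$ (using \cref{preldis} for normality and cyclicity of $\mathcal{G}(X)/Dis(X)$) and checks directly that the orbit map is an epimorphism---same content, different packaging. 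The genuine divergence is in the forward direction: the paper first passes to a primitive, hence prime-size, epimorphic image (via the main theorem of \cite{cedo2020primitive} and \cite[Theorem 2.13]{etingof1998set}) and then applies \cref{dyn} to write $X\cong I\times_{\alpha}S$, asserting that $Dis(X)$ preserves the fibres $\{i\}\times S$; you instead take an arbitrary trivial quotient $p:X\longrightarrow Y$ from \cref{crit} and argue functorially that $Dis(X)=\mathcal{G}(X)^2\subseteq Ker(\bar{p})$, so that equivariance forces $Dis(X)$ to stabilise each fibre of $p$. Your route avoids the dynamical-extension machinery and the prime-size reduction entirely, and it makes explicit the fibre-preservation computation that the paper compresses into a single sentence; in fact your kernel step can be done even more cheaply, since $\bar{p}(\sigma_x\sigma_y^{-1})=\sigma_{p(x)}\sigma_{p(y)}^{-1}=\id$ when $Y$ is trivial, so the generators of $Dis(X)$ already lie in $Ker(\bar{p})$ without appealing to the ideal $\mathcal{G}(X)^2$ at all. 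Finally, you were right to insist that \cref{crit} be read with $|Y|>1$: as literally stated it is vacuously satisfiable by a singleton target, which would make every indecomposable cycle set have finite primitive level; the paper leaves this implicit, and your explicit handling of it is what makes the forward implication airtight.
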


\begin{proof}
     Suppose that $X$ has finite primitive level. Then by \cref{dyn} $X$ is isomorphic to a dynamical extension $I\times_{\alpha} S$, where $I$ is an indecomposable cycle set having prime size $p$. Moreover by \cite[Theorem 2.13]{etingof1998set}, $I$ is a trivial cycle set. These facts implies that $Dis(X)$ fixes the subsets $\{i\}\times S$, for every $i\in I$, hence it can not acts transitively on $X$.\\
    Conversely, suppose that $Dis(X)$ does not acts transitively on $X$ and let $\Delta:=\{\Delta_1,...,\Delta_m\}$ the set of its orbits. Since by \cref{preldis} $Dis(X)$ is a normal subgroup of $\mathcal{G}(X)$, it follows that $ \mathcal{G}(X)$ acts on $\Delta$. Moreover, we have that  $\sigma_xDis(X)=\sigma_yDis(X)$ for every $x,y\in X$, therefore by \cref{preldis} $\mathcal{G}(X)/Dis(X)$ is a cyclic group generated by an element $\sigma_xDis(X)$. Now, since $\mathcal{G}(X)$ acts transitively on $X$, we have that every $\sigma_x$ acts on $\Delta$ as a cycle $\delta$ of lenght $m$. Therefore, the map $r:X\longrightarrow \Delta$, $x\mapsto \Delta_x$, where $\Delta_x$ is such that $x\in \Delta_x$, is an epimorphism from $X$ to the trivial cycle set $(\Delta,\cdotp)$ given by $\Delta_i\cdotp \Delta_j:=\delta(\Delta_j)$ for every $\Delta_i,\Delta_j\in \Delta$, hence the thesis follows by \cref{crit}.
\end{proof}

%\begin{proof}
%     Suppose that $X$ has finite primitive level. Then by \cref{dyn} $X$ is isomorphic to a dynamical extension $I\times_{\alpha} S$, where $I$ is an indecomposable cycle set having prime size $p$. By a standard calculation, one can show that $Dis(X)$ fixes the subsets $\{i\}\times S$, for every $i\in I$, hence it can not act transitively on $X$.\\
%    Conversely, suppose that $Dis(X)$ does not act transitively on $X$ and let $\Delta:=\{\Delta_1,...,\Delta_m\}$ the set of its orbits. Since by \cref{preldis} $Dis(X)$ is a normal subgroup of $\mathcal{G}(X)$, we have that $ \mathcal{G}(X)$ acts on $\Delta$. Moreover, we have that $\sigma_xDis(X)=\sigma_yDis(X)$ for every $x,y\in X$, therefore by \cref{preldis} $\mathcal{G}(X)/Dis(X)$ is a cyclic group generated by an element $\sigma_xDis(X)$. Now, since $\mathcal{G}(X)$ acts transitively on $X$, we have that every $\sigma_x$ acts on $\Delta$ as a cycle $\delta$ of lenght $m$. Therefore, we have that the map $r:X\longrightarrow \Delta$, $x\mapsto \Delta_x$, where $\Delta_x$ is such that $x\in \Delta_x$, is an epimorphism from $X$ to the trivial cycle set $(\Delta,\cdotp)$, where $\Delta_i\cdotp \Delta_j:=\delta(\Delta_j)$ for every $\Delta_i,\Delta_j\in \Delta$, hence the thesis follows by \cref{crit}.
%\end{proof}

As a corollary, we provide a necessary condition to test the simplicity of a cycle set. Recall that an indecomposable cycle set $X$ is said to be \emph{simple} if it has no epimorphic images different from itself and the singleton.

\begin{cor}
    Let $X$ be an indecomposable simple cycle set such that $|X|$ is not a prime number. Then, $Dis(X)$ acts transitively on $X$.
\end{cor}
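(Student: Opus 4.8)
The plan is to deduce this corollary directly from \cref{carattfinprim}. The key observation is that if $X$ is simple and $|X|$ is not prime, then $X$ cannot have finite primitive level, and so by the theorem $Dis(X)$ must act transitively.

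First I would argue that a simple cycle set $X$ with $|X|$ not prime has infinite primitive level. Suppose toward a contradiction that $X$ had finite primitive level. By the definition of primitive level (or directly by \cref{crit}), there would exist a proper epimorphic image of $X$ that is a nontrivial cycle set; indeed \cref{crit} gives an epimorphism $p\colon X\longrightarrow Y$ onto a trivial indecomposable cycle set $Y$. Since $X$ is simple, its only epimorphic images are itself and the singleton, so $Y$ must be either a singleton or isomorphic to $X$ itself. A singleton is excluded because a trivial indecomposable cycle set $Y$ obtained from \cref{crit} satisfies $|Y|>1$ (it is the image witnessing finite primitive level, so it is a genuine nontrivial trivial cycle set). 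Hence $Y\cong X$, which would force $X$ itself to be a trivial indecomposable cycle set. But a trivial indecomposable cycle set has $\mathcal{G}(X)$ cyclic acting regularly, so $X$ is primitive only when $|X|$ is prime; and any trivial cycle set on a non-prime-order set is decomposable or has proper epimorphic images, contradicting either indecomposability or simplicity together with $|X|$ non-prime.

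With the contradiction established, $X$ does not have finite primitive level, and \cref{carattfinprim} immediately yields that $Dis(X)$ acts transitively on $X$, which is the claim.

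The main obstacle I anticipate is making the simple-implies-infinite-primitive-level step airtight, specifically ruling out the possibility that the trivial indecomposable image $Y$ from \cref{crit} coincides with $X$ in a way that does not contradict $|X|$ being non-prime. The cleanest route is probably to invoke the remark following the definition of primitivity: every indecomposable cycle set of prime size is primitive and, by the main theorem of \cite{cedo2020primitive}, there are no others, so a trivial indecomposable cycle set that equals $X$ would have to be primitive only for prime $|X|$. This pins down that simplicity plus finite primitive level plus non-prime size are jointly contradictory, and I would phrase the argument to lean on that dichotomy rather than reproving it.
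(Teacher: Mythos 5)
Your proof is correct and takes essentially the same route as the paper: the paper's own proof is just ``It follows directly by \cref{carattfinprim}'', leaving implicit the step that simplicity together with non-prime size rules out finite primitive level (since a simple cycle set's only proper epimorphic image is the singleton, finite primitive level would force $X$ itself to be primitive, hence of prime size by \cite{cedo2020primitive}). Your filling-in of that step via \cref{crit} and the prime-size dichotomy for primitive cycle sets is sound.
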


\begin{proof}
    It follows directly by \cref{carattfinprim}.
\end{proof}

In the particular case of cycle sets having prime-squared size, the condition of the previous result also is sufficient.

\begin{cor}
    Let $X$ be an indecomposable cycle set having size $p^2$, for a prime number $p$. Then, $X$ is a simple cycle set if and only if $Dis(X)$ acts transitively on $X$.
\end{cor}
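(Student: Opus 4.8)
The plan is to dispatch the two implications separately, treating the forward direction as an immediate consequence of the preceding corollary and concentrating the real work on the converse.

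For the ``only if'' direction, suppose $X$ is simple. Since $|X| = p^2$ is not a prime number, the previous corollary applies verbatim and yields that $Dis(X)$ acts transitively on $X$; so nothing new is needed here.

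For the ``if'' direction, I would argue by contradiction, combined with \cref{carattfinprim}. Assume $Dis(X)$ acts transitively on $X$; by \cref{carattfinprim} this is precisely the statement that $X$ does \emph{not} have finite primitive level. Suppose, towards a contradiction, that $X$ is not simple. Then there is a cycle set epimorphism $p\colon X\longrightarrow Y$ with $1<|Y|<|X|=p^2$, and $Y$ is again indecomposable, being the image of the transitive action of $\mathcal{G}(X)$ under an equivariant surjection. By \cref{epiid} we may write $Y=X/I$ for an ideal $I$ of $\mathcal{G}(X)$, so that the elements of $Y$ are the $I$-orbits on $X$. Since $I$ is a normal subgroup of the multiplicative group $\mathcal{G}(X)$ and the latter acts transitively on $X$, all $I$-orbits have the same cardinality; hence $|Y|$ divides $|X|=p^2$. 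The only divisor strictly between $1$ and $p^2$ is $p$, so $|Y|=p$.

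As every indecomposable cycle set of prime size is primitive, $Y$ is primitive. The chain $X_1:=X$, $X_2:=Y$ then satisfies $|X_1|=p^2>|X_2|=p>1$, admits the epimorphism $p\colon X_1\longrightarrow X_2$, and terminates in the primitive cycle set $X_2$; by definition this exhibits $X$ as a cycle set of finite primitive level, contradicting the conclusion drawn from \cref{carattfinprim}. Therefore $X$ must be simple. The only point requiring genuine care is the divisibility of $|Y|$ into $p^2$, which forces the intermediate image to have prime size $p$; this rests on the elementary but essential observation that the orbits of the normal subgroup $I$ inside the transitive group $\mathcal{G}(X)$ are equinumerous. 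Everything else is bookkeeping with the definitions of simplicity and of primitive level.
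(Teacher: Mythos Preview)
Your overall strategy coincides with the paper's: both arguments reduce to showing that, when $|X|=p^{2}$, simplicity is equivalent to \emph{not} having finite primitive level, and then invoke \cref{carattfinprim}. The forward direction is handled exactly as in the paper (via the preceding corollary), and your converse is just an unpacked version of the paper's one-line appeal to \cite[Lemma 1]{cacsp2018} and \cite[Theorem 2.13]{etingof1998set}.

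There is, however, one genuine slip in your converse. You write ``By \cref{epiid} we may write $Y=X/I$ for an ideal $I$ of $\mathcal{G}(X)$'', but \cref{epiid} does not assert this: it only produces the ideal $I(p)$ and, in the other direction, shows that ideals give quotients. In general $Y$ is merely a covering image of $X/I(p)$ (see \cref{fattepi}), not $X/I(p)$ itself, so your equal-orbit-size argument for the normal subgroup $I$ does not directly yield $|Y|\mid |X|$. The divisibility you need follows instead from \cref{dyn} (equivalently \cite[Lemma 1]{cacsp2018}): any epimorphism from the indecomposable $X$ onto $Y$ realises $X\cong Y\times_{\alpha}S$, whence $|Y|\mid |X|$. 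With this correction in place your proof is complete and matches the paper's.
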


\begin{proof}
    Since $|X|=p^2$ for a prime number $p$, then by \cite[Lemma 1]{cacsp2018} and \cite[Theorem 2.13]{etingof1998set} $X$ is simple if and only if it has not finite primitive level, hence the thesis follows by \cref{carattfinprim}.
\end{proof}

\section{Examples and applications}

In this section, we exhibit several examples and non-examples of cycle sets having finite primitive level, computing the exact primitive level in some cases. As an application of the main result of the previous section, we enumerate the indecomposable cycle sets of finite primitive level having small size. Moreover, we study some relations between these cycle sets and other classes recently considered in other papers.

\medskip

Many examples of indecomposable cycle sets having finite primitive level appeared in literature. Below, we exhibit some of them.

\begin{exs}\label{esfin}
    \begin{itemize}
         \item[1)] If $X$ is an indecomposable cycle set with $\mathcal{G}(X)$ abelian and $|X|=p_1^{\alpha_1}...p_n^{\alpha_n}$, where $p_1,...,p_n$ are distinct prime numbers, then $fpl(X)=\alpha_1+...+\alpha_n$ (see \cite[Theorem 4.4]{castelli2022simplicityarx} for more details). These cycle sets have been explicitly classified if $mpl(X)=2$ (see \cite{JePiZa20x}) and if $\mathcal{G}(X)$ is cyclic (see \cite{jedlivcka2021cocyclic}).
        \item[2)] Every indecomposable cycle set $X$ having finite multipermutation level is a cycle set of finite primitive level (see \cite[Corollary 4.5]{castelli2022simplicityarx}) and $mpl(X)\leq fpl(X)$ (several concrete examples of indecomposable cycle sets belonging to the multipermutation ones are contained, for example, in \cite{jedlivcka2023indecomposable} and \cite{cedo2022indecomposable}). 
       \item[3)] Let $k$ be an odd number and $(S,\cdotp)$ be the trivial cycle set given by  $S:=\mathbb{Z}/k\mathbb{Z}$ and $x\cdotp y:=y+1$ for all $x,y\in S$ and $(I,\star)$ the cycle set given by $I:=\{1,2,3,4\}$, $\sigma_1:=(1\quad 4) $, $\sigma_2:=(1\quad 3\quad 4\quad 2) $, $\sigma_3:=(2\quad 3) $ and $\sigma_4:=(1\quad 2\quad 4\quad 3) $. Then, the direct product $S\times I$ is a cycle set of finite primitive level, since the projection on the first component $S\times I\longrightarrow S$, $(s,i)\mapsto s$ give rise to a cycle sets epimorphism. This family of cycle sets appears in \cite[Example $9$]{cacsp2018}. Note that these cycle sets are not of finite multipermutation level since $Ret(S\times I)\cong I$.
        \item[4)] Let $G=\mathbb{Z}/6\mathbb{Z}\times \mathbb{Z}/6\mathbb{Z} $ and $\cdotp$ the binary operation on $G$ given by $$(i,j)\cdotp (k,l):=(k-j,l+t_{k-i}) $$
        where $t_x=1$ if $x=0$ and $t_x=3$ otherwise. Then, $(G,\cdotp)$ is the indecomposable cycle set constructed in \cite[Remark 4.10]{cedo2021constructing}. By a standard calculation, one can show that $(G,\cdotp)$ has the indecomposable trivial cycle set of size $2$ as epimorphic image.
    \end{itemize}
\end{exs}

As a generalization of 3) of \cref{esfin}, one can easily show that the class of indecomposable cycle sets of finite primitive level is closed by indecomposable dynamical extension.

\begin{prop}\label{dynfinite}
    Let $I$ be an indecomposable cycle set of finite primitive level and let $I\times_{\alpha} S$ be an indecomposable dynamical extension. Then, $I\times_{\alpha} S$ is an indecomposable cycle set of finite primitive level.
\end{prop}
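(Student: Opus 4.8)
The plan is to reduce the statement to the characterization just proved in \cref{carattfinprim}. Since $I\times_\alpha S$ is given to be an indecomposable cycle set, by that theorem it suffices to show that $Dis(I\times_\alpha S)$ does not act transitively on $I\times S$. Thus the whole proof becomes the task of understanding how the displacements group of the extension interacts with the projection $\pi: I\times_\alpha S\longrightarrow I$, $(i,s)\mapsto i$. First I would record that $\pi$ is a cycle set epimorphism: this is immediate from the definition of the dynamical operation $(x,s)\cdot(y,t)=(x\cdot y,\alpha_{(x,y)}(s,t))$, whose first coordinate is exactly $x\cdot y$, so $\pi$ respects the operation and is plainly surjective.

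Next I would analyze the action of the generators $\sigma_{(x,s)}$ of $\mathcal{G}(I\times_\alpha S)$ relative to this projection. Because the first coordinate of $(x,s)\cdot(y,t)$ depends only on $x\cdot y$, each $\sigma_{(x,s)}$ permutes the fibers $\{i\}\times S$ by the rule induced from $\sigma_x$ on $I$; that is, $\pi\circ\sigma_{(x,s)}=\sigma_x\circ\pi$ in the obvious sense. Consequently, for any two generators, the product $\sigma_{(x,s)}\sigma_{(y,t)}^{-1}$ induces $\sigma_x\sigma_y^{-1}$ on the quotient $I$ under $\pi$. This gives a homomorphism $Dis(I\times_\alpha S)\longrightarrow Dis(I)$ compatible with $\pi$, so every element of $Dis(I\times_\alpha S)$ maps a fiber $\{i\}\times S$ into the fiber $\{j\}\times S$ where $j$ is the image of $i$ under the corresponding element of $Dis(I)$.

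The key step is then to invoke the hypothesis that $I$ has finite primitive level. By \cref{carattfinprim} applied to $I$, the group $Dis(I)$ does \emph{not} act transitively on $I$; let $O\subsetneq I$ be a proper $Dis(I)$-orbit. Pulling this back through the compatibility above, the set $\pi^{-1}(O)=\bigcup_{i\in O}\{i\}\times S$ is invariant under $Dis(I\times_\alpha S)$, and it is a proper nonempty subset of $I\times S$ since $O$ is a proper nonempty subset of $I$. Hence $Dis(I\times_\alpha S)$ admits a proper invariant subset and cannot act transitively on $I\times S$. Applying \cref{carattfinprim} in the reverse direction to the indecomposable cycle set $I\times_\alpha S$ yields that it has finite primitive level, which is the claim.

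The part requiring the most care is verifying the compatibility of $Dis$-actions with the projection, i.e. that $Dis(I\times_\alpha S)$ genuinely preserves the fiber partition in a way controlled by $Dis(I)$. The subtlety is that an element of $Dis(I\times_\alpha S)$ is a word in the $\sigma_{(x,s)}^{\pm 1}$, and one must check that the induced action on $I$ factors through elements of $Dis(I)$ (built from the $\sigma_x\sigma_y^{-1}$) rather than merely through $\mathcal{G}(I)$; this follows because displacements are generated by \emph{ratios} $\sigma_{(x,s)}\sigma_{(y,t)}^{-1}$, each inducing $\sigma_x\sigma_y^{-1}\in Dis(I)$, and these ratios generate $Dis(I\times_\alpha S)$. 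Once this bookkeeping is in place the nontransitivity transfers cleanly, and I would expect no further obstacle.
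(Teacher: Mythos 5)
Your proof is correct, and every step checks out: the projection $\pi\colon I\times_{\alpha}S\longrightarrow I$ is a cycle set epimorphism, the intertwining $\pi\circ\sigma_{(x,s)}=\sigma_x\circ\pi$ holds, displacement generators $\sigma_{(x,s)}\sigma_{(y,t)}^{-1}$ are carried to displacement generators $\sigma_x\sigma_y^{-1}$, so the preimage of a proper $Dis(I)$-orbit is a proper nonempty $Dis(I\times_{\alpha}S)$-invariant set, and both applications of \cref{carattfinprim} are legitimate since $I$ and $I\times_{\alpha}S$ are both indecomposable. However, this is a genuinely different and heavier route than the one the paper intends: its proof is recorded as ``Straightforward'', and the expected argument uses only \cref{crit}, not the main theorem. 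Indeed, since $I$ has finite primitive level, \cref{crit} yields a trivial indecomposable cycle set $Y$ and an epimorphism $p\colon I\longrightarrow Y$; composing with the projection gives the epimorphism $p\circ\pi\colon I\times_{\alpha}S\longrightarrow Y$, and \cref{crit}, applied now to the indecomposable cycle set $I\times_{\alpha}S$, immediately gives the claim. What your route buys is an explicit view of the group-theoretic mechanism --- in fact it reproduces, for the special epimorphism $\pi$, the fiber-fixing argument inside the paper's own proof of \cref{carattfinprim} --- but it costs you exactly the bookkeeping you flag as delicate, namely that the induced map on permutation groups restricts to $Dis(I\times_{\alpha}S)\longrightarrow Dis(I)$. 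That bookkeeping is avoidable (the composition-of-epimorphisms argument never mentions $Dis$), and it is also already available in the paper: any cycle set epimorphism induces a left brace epimorphism $\bar{p}\colon\mathcal{G}(X)\longrightarrow\mathcal{G}(Y)$, and by \cref{preldis} the displacement group is the brace ideal $\mathcal{G}(X)^2$, which any brace homomorphism preserves.
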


\begin{proof}
    Straightforward.
\end{proof}

By previous proposition, examples of indecomposable cycle sets of finite primitive level occur in abundance (see \cite[Section 5]{cacsp2018} for several concrete examples).

\smallskip

In 1) of \cref{esfin} we exhibit a family of cycle sets for which the primitive level assumes the maximum possible value. In the next results, we show that this also happens for indecomposable cycle sets of square-free size and for the ones having cyclic permutation left brace. 

\begin{theor}\label{sqfree}
    Let $X$ be an indecomposable cycle set having size $p_1 ... p_n$, where $n$ is a natural number and $p_1,...,p_n$ are distinct prime numbers. Then, $fpl(X)=n$.
\end{theor}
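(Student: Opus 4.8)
The plan is to prove the two inequalities $fpl(X)\le n$ and $fpl(X)\ge n$ separately, writing $\Omega(d)$ for the number of prime factors of a positive integer $d$ counted with multiplicity; since $|X|=p_1\cdots p_n$ is square-free we have $\Omega(|X|)=n$ and every divisor of $|X|$ is again square-free.

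For the upper bound, consider any chain $X=X_1,\dots,X_k$ as in the definition of primitive level. Each $X_{i+1}$ is an epimorphic image of the indecomposable cycle set $X_i$, hence is itself indecomposable, and by \cref{epiid} it is isomorphic to $X_i/I_i$ for an ideal $I_i$ of $\mathcal{G}(X_i)$; as the fibres of $p_{I_i}$ are the $I_i$-orbits, which all have the same cardinality, $|X_{i+1}|$ divides $|X_i|$. Thus $|X_k|\mid\cdots\mid|X_1|=|X|$ with strictly decreasing cardinalities, all bigger than $1$, so $n=\Omega(|X_1|)>\Omega(|X_2|)>\cdots>\Omega(|X_k|)\ge 1$, which forces $k\le n$ and hence $fpl(X)\le n$.

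For the lower bound I would produce a chain of length exactly $n$ by resolving a maximal chain of ideals of the left brace $\mathcal{G}:=\mathcal{G}(X)$. Fix $\{0\}=J_0\subsetneq J_1\subsetneq\cdots\subsetneq J_r=\mathcal{G}$ with each $J_i/J_{i-1}$ a minimal ideal of the quotient brace $\mathcal{G}/J_{i-1}\cong\mathcal{G}(X/J_{i-1})$. By \cref{epiid} each $J_i$ yields a quotient cycle set $X/J_i$, and the inclusions $J_{i-1}\le J_i$ induce a chain of epimorphisms $X=X/J_0\to X/J_1\to\cdots\to X/J_r=\{\ast\}$ whose cardinalities decrease from $|X|=p_1\cdots p_n$ to $1$. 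The decisive point, which I expect to be the main obstacle, is to show that at each step the cardinality drops by a factor that is either $1$ or a single prime.

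To establish this, I would argue that the chief factor $M:=J_i/J_{i-1}$, being a minimal ideal of $\mathcal{G}/J_{i-1}$, is $\lambda$-invariant and normal in the multiplicative group, so that its additive group is minimal among such subgroups and is therefore an elementary abelian $q$-group for a prime $q$; in particular $|M|=q^t$. Since $X$ is indecomposable, $\mathcal{G}/J_{i-1}$ acts transitively on $X/J_{i-1}$ and the normal subgroup $M$ has orbits of equal size there, each dividing $|M|=q^t$ and dividing $|X/J_{i-1}|$, a square-free divisor of $p_1\cdots p_n$. Hence the common orbit size, which equals the ratio $|X/J_{i-1}|/|X/J_i|$, lies in $\{1,q\}$. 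Granting this, the integers $\Omega(|X/J_i|)$ decrease from $n$ to $0$ in steps of $0$ or $1$, so every value in $\{1,2,\dots,n\}$ is attained; selecting for each such value a quotient of that $\Omega$-level gives cycle sets of strictly decreasing square-free cardinalities, all bigger than $1$, the last of which has prime size and is therefore primitive. This is a chain of length $n$, whence $fpl(X)\ge n$, and together with the upper bound it gives $fpl(X)=n$.
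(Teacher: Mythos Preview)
Your upper bound is fine in spirit, though the appeal to \cref{epiid} is slightly off: that proposition does not assert that $X_{i+1}\cong X_i/I_i$, only that the epimorphism factors through such a quotient followed by a covering (\cref{fattepi}). The divisibility $|X_{i+1}|\mid |X_i|$ you need follows directly from \cref{dyn}, since $X_i\cong X_{i+1}\times_\alpha S$.

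The genuine gap is in the lower bound, precisely at the point you flagged as the main obstacle. Your claim that a minimal ideal $M=J_i/J_{i-1}$ of the left brace $\mathcal{G}/J_{i-1}$ has elementary abelian additive group (hence prime-power order) is not justified, and it is false for left braces in general: simple left braces of non-prime-power order exist, and such a brace is a minimal ideal of itself. The heuristic ``minimal among such subgroups $\Rightarrow$ elementary abelian'' would work if the relevant invariance conditions ($\lambda$-invariance together with normality in $(\cdot,\circ)$) all acted by automorphisms of the additive group, so that $M$ would be an irreducible module over some ring; but $\circ$-conjugation is not additive, so no such module structure is available. Without prime-power order for $M$, its orbit size on $X/J_{i-1}$ could be a product of several primes, and your $\Omega$-count could jump by more than one, so the argument collapses.

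The paper closes exactly this gap by importing a structural result specific to the square-free case, \cite[Theorem 4.1]{cedo2022indecomposable}: for $|X|=p_1\cdots p_n$ one has $\mathcal{G}(X)=P_1\circ\cdots\circ P_n$ with each $P_1\circ\cdots\circ P_i$ an ideal, where $P_i$ is the Sylow $p_i$-subgroup of $(\mathcal{G}(X),+)$. This furnishes a chain of ideals whose successive factors are $p_i$-groups, which is precisely the input your chief-series argument was missing; with it, the orbit of $P_1$ on $X$ has size $p_1$, giving $|X/P_1|=p_2\cdots p_n$, and induction finishes the proof.
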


\begin{proof}
    We show the thesis by induction on $n$. If $n=1$, the thesis directly follows by \cite[Theorem 2.13]{etingof1998set}. Now, suppose that $X$ is an indecomposable cycle set having size $p_1 ... p_n$, where $n$ is a natural number and $p_1,...,p_n$ are distinct prime numbers. By \cite[Theorem 4.1]{cedo2022indecomposable}, we have that $\mathcal{G}(X)=P_1\circ ...\circ P_n$, where $P_i$ is the $p_i$-Sylow of $(\mathcal{G}(X),+)$, and without loss of generality we can suppose that $P_1\circ ...\circ P_i$ is an ideal of $\mathcal{G}(X)$ for all $i\in \{1,...,n\}$. In particular, $P_1$ is an ideal of $\mathcal{G}(X)$ and hence a normal subgroup. Therefore, the orbits of $P_1$ form an imprimitive blocks system of $X$, hence necessarily every orbit of $P_1$ must have size $p_1$. By \cref{epiid}, $P_1$ induces a cycle set structure $X/P_1$ of size $p_2...p_n$ and the natural map from $X$ to $X/P_1$ is a cycle set epimorphism. By inductive hypothesis, it follows that $fpl(X/P_1)=n-1$, and since $X/P_1$ is an epimorphic image of $X$ the thesis follows.
\end{proof}

\begin{theor}\label{permcyc}
    Let $X$ be an indecomposable cycle set having size $p_1^{\alpha_1} ... p_n^{\alpha_n}$, where $n$ is a natural number and $p_1,...,p_n$ are distinct prime numbers. Moreover, suppose that $\mathcal{G}(X)$ is a permutation left brace with cyclic additive group. Then, $fpl(X)=\alpha_1+...+\alpha_n$.
\end{theor}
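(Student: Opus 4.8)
The plan is to squeeze $fpl(X)$ between $\Omega(N):=\alpha_1+\dots+\alpha_n$ from above and below, where $N:=|X|$ and $\Omega$ counts prime factors with multiplicity.

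First I would record a general \emph{upper bound}, valid for every indecomposable cycle set. If a chain $X=X_1\to\cdots\to X_k$ realises the primitive level, then by \cref{epiid} each arrow $X_i\to X_{i+1}$ is the passage to the quotient by an ideal of $\mathcal{G}(X_i)$, whose fibres are the orbits of a normal subgroup sitting inside a transitive action; such orbits all share a common size, so $|X_{i+1}|$ is a proper divisor of $|X_i|$. Hence the sizes form a strictly decreasing chain of divisors $N=|X_1|>\cdots>|X_k|=p$ terminating at a prime, and any such chain has at most $\Omega(N)$ terms. Therefore $fpl(X)\le\Omega(N)$, with no use of the cyclic hypothesis.

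For the \emph{lower bound} I would induct on $\Omega(N)$, peeling off one prime factor at a time exactly as in \cref{sqfree}. Put $B:=\mathcal{G}(X)$, so $(B,+)\cong\mathbb{Z}/m\mathbb{Z}$ is cyclic, and realise $X$ via \cref{cosmod} with point stabiliser $K$, a core-free subgroup of $(B,\circ)$ of order $m/N$. Fix a prime $p\mid N$ with $\alpha_p\ge1$, let $Q\le B$ be the (unique, hence characteristic) additive $p$-Sylow, of order $p^{e}$ with $e:=v_p(m)$, and let $P\le Q$ be the additive subgroup of order $p^{\,e-\alpha_p+1}$. Granting that $P$ is an \emph{ideal} of $B$, \cref{epiid} yields a cycle-set epimorphism $X\to X/P$ whose fibres are the $P$-orbits. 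Since the $p$-part $Q\cap K$ of $K$ has order $p^{\,e-\alpha_p}=|P|/p$ and is contained in $P$, each orbit has size $|P|/|P\cap K|=[P:Q\cap K]=p$, so $|X/P|=N/p$. Moreover $\mathcal{G}(X/P)$ is a quotient brace of $B$ (via \cref{epiid} and \cref{fattepi}), hence again a permutation brace with cyclic additive group, and $X/P$ is indecomposable; the inductive hypothesis gives $fpl(X/P)=\Omega(N)-1$, and prepending $X$ produces a chain of length $\Omega(N)$, whence $fpl(X)\ge\Omega(N)$. The base case $\Omega(N)=1$ is the prime-size (trivial, primitive) case by \cite[Theorem 2.13]{etingof1998set}, and the chain just built simultaneously certifies that $X$ has finite primitive level.

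The main obstacle is the claim that these additive subgroups of $B$ are two-sided ideals. Writing each $\lambda_a$ as multiplication by a unit $u_a\in(\mathbb{Z}/m\mathbb{Z})^{\ast}$, any additive subgroup is automatically $\lambda$-invariant, hence a left ideal; and a short computation, using that $(\mathbb{Z}/m\mathbb{Z})^{\ast}$ is abelian, shows that $H=d\,\mathbb{Z}/m\mathbb{Z}$ is normal in $(B,\circ)$ precisely when $u_h\equiv1\pmod d$ for every $h\in H$. Proving this congruence is the technical heart: it says that for a brace with cyclic additive group the $\lambda$-action respects the additive prime-power filtration, and it follows from the decomposition of $B$ into its prime-power subbraces together with the filtration structure of cyclic $p$-subbraces, the prime-power analogue of \cite[Theorem 4.1]{cedo2022indecomposable} used in \cref{sqfree}. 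Once this is available the induction runs automatically; as a consistency check, when $(B,\circ)$ happens to be abelian the whole statement reduces to part~1) of \cref{esfin}.
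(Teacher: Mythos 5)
Your overall skeleton---induction on $\Omega(N)=\alpha_1+\dots+\alpha_n$, base case of prime size via \cite[Theorem 2.13]{etingof1998set}, and the upper bound $fpl(X)\le\Omega(N)$ from divisibility of sizes along any chain---matches the paper's proof, and the upper bound is essentially fine (though your justification is slightly off: by \cref{fattepi} an epimorphism is an ideal quotient \emph{followed by a covering}, and coverings can also be non-injective; constant fibre size, hence divisibility, follows anyway from \cref{dyn}). The genuine gap is in the only step carrying real content: producing an ideal of $B=\mathcal{G}(X)$ whose orbits on $X$ have size exactly $p$. You propose to take a prescribed additive subgroup $P$ inside the additive $p$-Sylow and claim it is automatically an ideal, reducing this to the congruence $u_h\equiv 1\pmod d$ for all $h\in H=d\,\mathbb{Z}/m\mathbb{Z}$, which you then say follows from ``the decomposition of $B$ into its prime-power subbraces.'' Both the congruence and that decomposition are false for braces with cyclic additive group. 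Take $B=\mathbb{Z}/6\mathbb{Z}$ with $\lambda_a(b):=5^a b$: since $5^2\equiv 1 \pmod 6$, the map $\lambda$ is a well-defined homomorphism and this is a left brace with $(B,\circ)\cong \Sym(3)$; the additive $2$-Sylow $H=3\,\mathbb{Z}/6\mathbb{Z}=\{0,3\}$ is a left ideal but \emph{not} an ideal, because $\Sym(3)$ has no normal subgroup of order $2$; equivalently $u_3=5\not\equiv 1\pmod 3$. For the same reason $B$ is not a direct product of prime-power subbraces. So the ``technical heart'' of your argument is not merely unproven: the general principle you appeal to is wrong, and the lower-bound induction collapses without it.

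The paper fills exactly this hole with a different, and apparently unavoidable, input: by \cite[Corollary of Proposition 14]{rump2019classification}, a finite non-trivial left brace with cyclic additive group has non-trivial socle. One then takes the subgroup $I$ of prime order of the cyclic group $(Soc(\mathcal{G}(X)),+)$; it is characteristic there, hence $\lambda$-invariant, and since conjugation acts on socle elements through $\lambda$ (namely $a\circ s\circ a^{-1}=\lambda_a(s)$ for $s\in Soc(\mathcal{G}(X))$), $I$ is an ideal of $\mathcal{G}(X)$. Normality plus faithfulness of the action on $X$ force all $I$-orbits to have the same prime size, so $|X/I|=|X|/p_1$ after relabelling the primes, $\mathcal{G}(X/I)$ again has cyclic additive group, and the induction closes exactly as you intend. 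Note that the ideal the paper finds sits inside the socle and has prime \emph{order}; it is not in general the subgroup of order $p^{\,e-\alpha_p+1}$ of the Sylow filtration that your argument requires, so the gap cannot be patched by restricting your claim to permutation braces---one needs Rump's socle theorem (or a substitute for it) in any case.
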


\begin{proof}
    We show the thesis by induction on $\alpha_1+...+\alpha_n$. If $\alpha_1+...+\alpha_n=1$ necessarily we have $n=1$ and $\alpha_1=1$, therefore the thesis follows by \cite[Theorem 2.13]{etingof1998set}. Now, suppose that $\alpha_1+...+\alpha_n>1$. Then by \cite[Corollary of Proposition 14]{rump2019classification}, since $\mathcal{G}(X)$ has cyclic additive group, we have that $|Soc(\mathcal{G}(X))|>1$, therefore there exist a normal subgroup $(I,+)$ of $(Soc(\mathcal{G}(X)),+)$ having prime size. Without loss of generality, we can suppose that $|I|=p_1$. Since $I$ is a characteristic subgroup of $(Soc(\mathcal{G}(X)),+)$, it follows that $I$ is an ideal of $ \mathcal{G}(X)$. Moreover, every orbit of $X$ respect to the action of $I$ must have size $p_1$ and hence it induces a cycle set $X/I$ of size $p_1^{\alpha_1-1}...p_n^{\alpha_n}$. By inductive hypothesis, we have that $fpl(X/I)=\alpha_1-1+...+\alpha_n$ and hence $fpl(X)\geq fpl(X/I)+1=\alpha_1+...+\alpha_n $, therefore the thesis follows.  
\end{proof}

\medskip

In \cite[Corollary 5.5]{castelli2022simplicity}, it has been shown that if $X$ is an indecomposable cycle set having finite primitive level, then every left multiplication has not fixed points. Here, we give a further information involving the cycle decomposition of the left multiplications. In the following, a $k$-cycle $(x_1\dots x_k)$ will be called \emph{trivial} if $k=1$.

\begin{prop}\label{divfpl}
    Let $X$ be an indecomposable cycle set of finite primitive level and $\{\alpha_1,...,\alpha_n\}$ the set of all the cycles (possibly trivial) belonging to at least a left multiplication $\sigma_x$. Then, there exist a prime divisor $p$ of $|X|$ that divides the length of $\alpha_i$, for all $i\in \{1,...,n\}$.
    %In particular, $\sigma_x$ has no fixed points for every $x\in X$.
\end{prop}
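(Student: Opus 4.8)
The plan is to reduce the statement to a trivial cycle set via Theorem \ref{carattfinprim} and then trace how cycles of the left multiplications behave under the canonical epimorphism onto that trivial quotient. Since $X$ has finite primitive level, by Theorem \ref{carattfinprim} the displacements group $Dis(X)$ does not act transitively on $X$; let $\Delta = \{\Delta_1,\dots,\Delta_m\}$ be its orbits. As in the proof of that theorem, $Dis(X) = \mathcal{G}(X)^2$ is normal (by Lemma \ref{preldis}), so $\mathcal{G}(X)$ permutes $\Delta$, and because $\sigma_x Dis(X) = \sigma_y Dis(X)$ for all $x,y$, every $\sigma_x$ induces one and the same permutation $\delta$ of $\Delta$, which is an $m$-cycle since $\mathcal{G}(X)$ acts transitively on $X$. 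Here $m > 1$ and $m$ divides $|X|$ (the orbits of $Dis(X)$ all have the same size because $Dis(X)$ is normal in the transitive group $\mathcal{G}(X)$, so $m \mid |X|$). I would fix a prime divisor $p$ of $m$; this $p$ will be the prime claimed in the statement.

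The key observation is the following. Let $\sigma_x$ be any left multiplication and $\alpha$ one of its cycles, acting on the set of points $\{y_0, y_1 = \sigma_x(y_0), \dots, y_{k-1}\}$ with $\sigma_x(y_{k-1}) = y_0$, so $\alpha$ has length $k$. Passing to the quotient map $r : X \to \Delta$, $y \mapsto \Delta_y$ (which is a cycle set epimorphism by the proof of Theorem \ref{carattfinprim}), the action of $\sigma_x$ on $X$ covers the action of $\delta$ on $\Delta$: concretely $\Delta_{\sigma_x(y)} = \delta(\Delta_y)$ for every $y$. Applying this along the cycle $\alpha$ gives $\Delta_{y_{i}} = \delta^{i}(\Delta_{y_0})$ for all $i$, and closing the cycle after $k$ steps forces $\delta^{k}(\Delta_{y_0}) = \Delta_{y_0}$. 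Since $\delta$ is an $m$-cycle, its order is exactly $m$, and $\delta^k$ fixes the point $\Delta_{y_0}$ only if $m \mid k$. Hence every cycle length $k$ of every $\sigma_x$ is divisible by $m$, and in particular by the prime $p \mid m$.

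I would then conclude: the prime $p$ divides $m$, which divides $|X|$, so $p$ is a prime divisor of $|X|$; and by the displayed argument $p$ divides the length of every cycle $\alpha_i$ appearing in any $\sigma_x$. This is precisely the assertion. The main obstacle, and the only point requiring care, is the claim that $\delta$ is a single $m$-cycle rather than an arbitrary permutation of the blocks of order $m$: this rests on the fact that $\mathcal{G}(X)/Dis(X)$ is \emph{cyclic} and generated by the common image $\sigma_x Dis(X)$ (Lemma \ref{preldis}), together with transitivity of $\mathcal{G}(X)$ on $X$ forcing transitivity on $\Delta$; a cyclic transitive action of $\langle \delta \rangle$ on the $m$-element set $\Delta$ means $\delta$ has order $m$ and acts as an $m$-cycle. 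Once this structural fact is in hand, the divisibility of each cycle length by $m$ is immediate from the covering relation $r \circ \sigma_x = \delta \circ r$.
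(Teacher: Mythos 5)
Your proof is correct, but it takes a different route from the paper's. The paper composes the epimorphisms in the definition of finite primitive level to reach a primitive image, which has prime size $p_1$ and hence (by Etingof--Schedler--Soloviev) is the trivial cycle set $\mathbb{Z}/p_1\mathbb{Z}$ with $x\cdot y=y+1$; it then invokes \cref{dyn} to write $X\cong I\times_{\alpha}S$ and reads off divisibility of every cycle length by $p_1$ from the first coordinate, which every $\sigma_{(i,s)}$ shifts by $+1$. You instead work with the block system $\Delta$ of the $Dis(X)$-orbits furnished by \cref{carattfinprim}, essentially re-running the construction inside that theorem's proof: normality of $Dis(X)$ (\cref{preldis}) and the congruence $\sigma_x Dis(X)=\sigma_y Dis(X)$ give a single induced permutation $\delta$ of $\Delta$, transitivity of $\mathcal{G}(X)$ forces $\delta$ to be an $m$-cycle, and the covering relation $r\circ\sigma_x=\delta\circ r$ forces $m$ to divide every cycle length. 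The two arguments share the same skeleton---project onto a trivial quotient on which all left multiplications act as one full cycle---but yours avoids the dynamical-extension machinery and the classification of prime-size solutions, and it yields a slightly stronger conclusion: every cycle length is divisible by $m=|X/Dis(X)|$, not merely by one prime divisor of $|X|$ (indeed, by \cref{cardtrivdiv} the paper's $p_1$ divides your $m$). Both arguments also correctly exclude trivial cycles: in your setup a fixed point would force $m\mid 1$, contradicting $m>1$, which recovers \cref{ossfixed} rather than assuming it.
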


\begin{proof}
    Since $X$ has finite primitive level, by \cref{dyn} there exist a prime number $p_1$ such that $X$ is isomorphic to a dynamical extension $I\times_{\alpha} S $ and $I$ is an indecomposable cycle set of size $p_1$, which by \cite[Theorem 2.13]{etingof1998set} can be identified with the one given by $I:=\mathbb{Z}/p_1\mathbb{Z}$ and $x\cdotp y:=y+1$ for all $x,y\in I$. Now, let $(i,s)$ and $(j,t)$ be elements of $I\times S$. If $(j,t)$ belongs to a $z$-cycle of $\sigma_{(i,s)} $ (by \cite[Corollary 5.5]{castelli2022simplicity} we must have $z>1$), then $(j,t)=\sigma^z_{(i,s)}(j,t)=(j+z,t)$, therefore $p_1$ must divides $z$. Since $(i,s)$ and $(j,t)$ are arbitrary elements of $X$, $p_1$ is the desired prime number.  
\end{proof}

\noindent In \cite{RaVe21}, Ramirez and Vendramin posed the following question.

\begin{ques}\label{quesven}
    Let $X$ be a cycle set. Is it true that if some $\sigma_x$ contains a non-trivial cycle of length coprime with $|X|$, then $X$ is decomposable?
\end{ques}

As an application of \cref{divfpl}, we give a positive answer when $X$ has finite multipermutation level.

\begin{cor}\label{corques}
     Let $X$ be a multipermutation cycle set. Suppose that some $\sigma_x$ contains a cycle of length coprime with $|X|$. Then, $X$ is decomposable.
\end{cor}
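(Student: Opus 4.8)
The plan is to prove the contrapositive and reduce immediately to \cref{divfpl}. Assuming that $X$ is \emph{indecomposable} (in addition to being multipermutation), I would show that then no left multiplication $\sigma_x$ can contain a cycle whose length is coprime with $|X|$; this contradicts the standing hypothesis and forces $X$ to be decomposable.

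The first step is to upgrade the multipermutation assumption to finite primitive level. By 2) of \cref{esfin}, every indecomposable cycle set of finite multipermutation level has finite primitive level, so under the contrapositive hypothesis $X$ is an indecomposable cycle set of finite primitive level --- exactly the setting in which \cref{divfpl} applies.

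Next I would invoke \cref{divfpl} to produce a prime divisor $p$ of $|X|$ that divides the length of every cycle $\alpha_i$ occurring in any $\sigma_x$. If $\ell$ denotes the length of the cycle assumed in the hypothesis to be coprime with $|X|$, then $p \mid \ell$ and $p \mid |X|$, whence $\gcd(\ell,|X|) \geq p > 1$, contradicting coprimality. Therefore the indecomposability assumption is untenable and $X$ must be decomposable. Note that this argument is insensitive to whether trivial cycles are counted: since $p \geq 2$, the conclusion of \cref{divfpl} already rules out cycles of length $1$, so no fixed point can intervene.

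I expect no genuine obstacle here: the result is a direct consequence of \cref{divfpl} once the coprimality condition is rephrased as the absence of a common prime factor. The only point meriting attention is that the corollary does not postulate indecomposability, so phrasing the argument contrapositively is what allows \cref{esfin} and \cref{divfpl} --- both stated for indecomposable cycle sets --- to be brought to bear.
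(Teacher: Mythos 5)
Your proposal is correct and follows essentially the same route as the paper: pass to the contrapositive, use 2) of \cref{esfin} to upgrade the indecomposable multipermutation hypothesis to finite primitive level, and then apply \cref{divfpl} to get a prime dividing both $|X|$ and every cycle length, contradicting coprimality. Your remark about trivial cycles being automatically excluded is a small additional clarification but does not change the argument.
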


\begin{proof}
    If suppose $X$ indecomposable, then it has finite primitive level. Then, by \cref{divfpl} every cycle contained in an arbitrary $\sigma_x$ has not coprime length with $|X|$, a contradiction.
\end{proof}

Actually, we are not able to state if the hypothesis on the multipermutation level can be dropped in \cref{corques}. In this context, note that a possible counterexample $X$ to \cref{quesven} would imply that $\mathcal{G}(X) $ is a \emph{singular} left brace, where a left brace $B$ is said to be singular if there exist and an indecomposable cycle set $X$ such that $B\cong \mathcal{G}(X)$ and a prime number $p$ that divides the order of $B$ but not the order of $X$ (for this reason, these cycle sets also will called singular). These left braces are recently characterised in \cite{rump2023primes}. Singular cycle sets seem to be very difficult to construct: actually, only a counterexample of size $8$, given in \cite{rump2023primes}, is known in literature. In the same paper, it was also shown that if $X$ is a singular cycle set, then so is its retraction, therefore the research of these cycle sets can be reduced in some sense with the irretractable ones. \\
Below, we recall Rump's singular cycle set and we use \cref{carattfinprim} to show that it has finite primitive level. Moreover, we use this cycle set to construct, by dynamical extensions obtained in \cite[Section 5]{cacsp2018}, a family of irretractable singular cycle sets.

\begin{ex}
Let $X:=\{0,1,2,3,4,5,6,7\}$ be the indecomposable cycle set given by 
$$ \sigma_0  = (07)(13)(25)(46) \qquad  \sigma_1 = (0264)(1375) $$
    $$\sigma_2 = (01)(25)(34)(67) \qquad \sigma_4 = (0462)(1573) $$
$$\sigma_3 = (02)(16)(34)(57) \qquad \sigma_5 = (0451)(2673) $$
$$\sigma_7 = (07)(16)(23)(45)\qquad  \sigma_6 = (0154)(2376).
 $$
 Then, the left brace $\mathcal{G}(X)$, that has size $24$, is singular since $3$ divides $|\mathcal{G}(X)|$ but not $|X|$. By a standard calculation, one can show that $\mathcal{G}(X)^2$ splits $X$ into the orbits $\{0,3,5,6\}$ and $\{1,2,4,7 \}$, hence by \cref{carattfinprim} $X$ has finite primitive level.  
  Now, let $S:=\mathbb{Z}/k\mathbb{Z}$, with $k$ an arbitrary number coprime with $3$, $A$ the set given by $A:=S\times S$, and $\alpha$ be the function from $X\times X\times A$ to $\Sym(A)$, $\alpha(x,y,(a,b))\mapsto\alpha_{(x,y)}((a,b),-)$ given by
 $$
\alpha_{(x,y)}((a,b),(c,d)):= \begin{cases} (c,d+1) & \mbox{ if }x=y \mbox{ and } a\neq c\\
(c,d) & \mbox{ if }x=y \mbox{ and } a= c\\
(c-b-1,d) & \mbox{ if }x\neq y 
\end{cases}
 $$
 for all $(x,y,(a,b))\in X\times X\times A$. By a standard calculation, one can show that the dynamical extension $X\times_{\alpha} A$ is an indecomposable cycle set and by \cite[Proposition 10]{cacsp2018} is irretractable. Since $\mathcal{G}(X)\cong \mathcal{G}(X\times_{\alpha} A)/I$ for a suitable ideal $I$, we have that $3$ divides $|\mathcal{G}(X\times_{\alpha} A)|$; on the other hand, $3$ does not divide $|X\times A|$. Moreover, by \cref{dynfinite}, the cycle set $X\times_{\alpha} A$ is of finite primitive level.
\end{ex}

\noindent In this context, an intriguing challenge is the construction of further singular cycle sets that are in some sense different from the previous ones: for example, one could ask if there exist singular cycle sets which have not finite primitive level.

\medskip

For a natural number $n$, let $c(n)$ be the number of indecomposable cycle sets of size $n$, $m(n)$ be the number of indecomposable cycle sets of size $n$ having finite multipermutation level and $fp(n)$ be the number of indecomposable cycle sets of size $n$ having finite primitive level. 
As an application of \cref{carattfinprim}, by means of the GAP package \cite{Ve15pack}, we computed, by a small GAP code, the first values of $fp(n)$. We summarize our calculations in the following table.

\vspace{4mm}

\begin{center}
 \begin{tabular}{|c|c|c|c|}
        \hline
        $n$ & $c(n)$ & $m(n)$  & $fp(n)$ \\
        \hline
        2 & 1 & 1 & 1 \\
        \hline
        3 & 1 & 1 & 1 \\
        \hline
        4 &  5 & 3 & 3 \\
        \hline
        5 & 1 & 1 & 1 \\
        \hline
        6 & 10 & 10 & 10 \\
        \hline
        7 & 1 & 1 & 1 \\
        \hline
        8 & 100 & 39 & 70 \\
        \hline
        9 & 16 & 13 & 13 \\
        \hline
    \end{tabular}
\end{center}

\begin{rem}
 For every $n\in \{2,...,9\}$ we have $m(n)\leq fp(n)$ and, if $n$ is a prime number, we obtain $m(n)=fp(n)=1$: these facts agree with 2) of \cref{esfin} and \cite[Theorem 2.13]{etingof1998set}. If $n=6$ we have $c(n)=m(n)=fp(n)$: this is consistent with \cite[Theorem 4.5]{cedo2022indecomposable} and \cref{sqfree}.    
\end{rem} 

\medskip

In the last part of this section, we focus on some classes of cycle sets present in literature that provide examples of indecomposable cycle sets which have not finite primitive level. 

\begin{exs}
    \begin{itemize}
        \item[1)] Every non-trivial simple cycle set has not finite primitive level because the only epimorphic images are the whole cycle set and the cycle set of size one (see \cite{castelli2022characterization,cedo2021constructing,cedo2022new} for several concrete examples).
        \item[2)] Let $X:=\{1,2,3,4,5,6,7,8\}$ and $\cdotp$ the binary operation given by $$\sigma_1=\sigma_2:=(3\quad 5\quad 4\quad 7)$$
        $$\sigma_3=\sigma_4:=(1\quad 6\quad 2\quad 8)$$
        $$\sigma_5=\sigma_7:=(1\quad 5\quad 6\quad 4\quad 2\quad 7\quad 8\quad 3)$$ 
        $$\sigma_6=\sigma_8:=(1\quad 3\quad 6\quad 5\quad 2\quad 4\quad 8\quad 7).$$
        Then, $2$ is a fixed point of $\sigma_1$ hence by \cref{ossfixed} $X$ can not have finite primitive level. This cycle set was given in \cite[Example 3.8]{acri2020retractability}. 
        Inspecting \cite[Table 3.2 and 3.3]{acri2020retractability} and by means of \cref{ossfixed}, one can find further examples of indecomposable cycle sets that are not of finite primitive level. 
    \end{itemize}
\end{exs}

A large family of indecomposable cycle sets that are not of finite primitive level is given by the so-called \emph{latin} cycle sets, where a cycle set is said to be latin if the right multiplication $\delta_x:X\longrightarrow X$, $y\mapsto y\cdotp x$ is bijective, for every $x\in X$ (see \cite{bon2019} for more details and concrete examples). Clearly, these cycle sets always are indecomposable.

\begin{cor}\label{quasigroupprim}
    Let $X$ be a latin cycle set, with $|X|>1$. Then, $X$ has not finite primitive level.
\end{cor}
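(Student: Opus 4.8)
The plan is to invoke \cref{carattfinprim}: a latin cycle set is automatically indecomposable, so $X$ fails to have finite primitive level precisely when $Dis(X)$ acts transitively on $X$. The entire task thus reduces to proving that the latin hypothesis forces $Dis(X)$ to be transitive.

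I would argue by contradiction. Suppose $Dis(X)$ is not transitive, and let $\Delta_1,\dots,\Delta_m$ be its orbits with $m>1$. By \cref{preldis}, $Dis(X)$ is a normal subgroup of $\mathcal{G}(X)$, so $\mathcal{G}(X)$ permutes the set $\{\Delta_1,\dots,\Delta_m\}$, and this block action factors through the quotient $\mathcal{G}(X)/Dis(X)$. The key structural point is that every left multiplication lies in the same coset of $Dis(X)$: indeed $\sigma_x\sigma_y^{-1}\in Dis(X)$ by the very definition of the displacements group, so $\sigma_x Dis(X)=\sigma_y Dis(X)$ for all $x,y$. Consequently all the $\sigma_x$ induce the same permutation of the blocks; moreover, since $\mathcal{G}(X)$ acts transitively on $X$ it acts transitively on the blocks, and as this block action is generated by the single element $\sigma_x Dis(X)$, that common permutation must be a single $m$-cycle. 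In particular $\sigma_x(\Delta_i)=\Delta_{i+1}$ for every $x$, with indices read cyclically.

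Now recall that $y\cdotp x=\sigma_y(x)$, so the right multiplication is $\delta_x(y)=\sigma_y(x)$. Fix $x$ and say $x\in\Delta_i$. For every $y\in X$ we then have $\delta_x(y)=\sigma_y(x)\in\sigma_y(\Delta_i)=\Delta_{i+1}$, the last equality using that every $\sigma_y$ sends $\Delta_i$ to the same successor block. Hence the image of $\delta_x$ is contained in $\Delta_{i+1}$, a proper subset of $X$ because $m>1$. This contradicts the latin hypothesis, which asserts that each $\delta_x$ is a bijection, in particular surjective. Therefore $m=1$, i.e. $Dis(X)$ is transitive, and \cref{carattfinprim} yields the claim.

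The only delicate step is the identification of the block action: once one observes that all left multiplications collapse to a single coset modulo $Dis(X)$, and hence act identically and cyclically on the orbits of $Dis(X)$, the conclusion is immediate from the formula $\delta_x(y)=\sigma_y(x)$. I expect no serious obstacle beyond making this coset argument precise; the latin condition is invoked exactly once, to contradict surjectivity of $\delta_x$.
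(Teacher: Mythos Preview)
Your proof is correct and reaches the same endpoint as the paper (transitivity of $Dis(X)$ together with \cref{carattfinprim}), but the paper establishes transitivity much more directly. Given any $x,y\in X$, the paper simply picks an auxiliary $z$, sets $w:=\sigma_z^{-1}(x)$, and uses surjectivity of the right multiplication $\delta_w$ to find $t$ with $\sigma_t(w)=y$; then $\sigma_t\sigma_z^{-1}\in Dis(X)$ sends $x$ to $y$. Your contradiction argument instead rebuilds the block machinery from the proof of \cref{carattfinprim} (normality of $Dis(X)$, common coset, cyclic action on blocks) in order to trap the image of $\delta_x$ in a single block. Both are valid, and yours makes the geometric obstruction very explicit, but the paper's two-line direct argument avoids reproving facts already absorbed into \cref{carattfinprim}.
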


\begin{proof}
If $x,y\in X$, if $z$ is an other element of $X$, there exist $t\in X$ such that $y=t\cdotp (\sigma_z^{-1}(x))=\sigma_t(\sigma_z^{-1}(x))$, hence $Dis(X)$ acts transitively on $X$. Therefore the thesis follows by \cref{carattfinprim}.
\end{proof}

%\begin{cor}
%    Let $X$ be a latin cycle set. Then, $Dis(X)$ acts transitively on $X$.
%\end{cor}

%\begin{proof}
%    If $|X|=1$, the thesis is trivial. Otherwise, it follows by \cref{carattfinprim} and \cref{quasigroupprim}.
%\end{proof}

In \cite{ballester2023solubility} the notion of \emph{soluble} solution (not necessarily involutive) was recently introduced. Here, we recall such a notion, restricting to an involutive setting and using the language of cycle sets, and we close the section showing that this class of cycle sets has empty intersection with the one of the cycle sets of finite primitive level.

%In the last part of this section, we focus on the relation between cycle sets of finite primitive level and soluble solutions (not necessarily involutive), recently introduced in \cite{ballester2023solubility}. Since here we restrict to involutive solutions, we recall the definition of soluble solutions using the language of cycle sets.

\begin{defin}\label{soluble}
    Let $X$ be a cycle set. Assume that there exists a sequence of subsets $X_t\subseteq...\subseteq X_1 \subseteq  X_0 = X$ with $X_t = \{x_t\}$ such that, for every $1 \leq i \leq t$, there exist a a cycle set $Y_i$ and a cycle set epimorphism $f_i:X\longrightarrow Y_i$ satisfying
    \begin{itemize}
        \item[1)] $X_i \in  X/Ker_{f_i}
$ for all $ 1 \leq i \leq t$;
        \item[2)]  $f_i(X_{i-1})$ is a trivial sub-cycle set of $Y_i$ given by $x\cdotp y=y$ for every $x,y\in f_i(X_{i-1})$, for all $1\leq i\leq t$.
    \end{itemize}
Then, $X$ is said to be \emph{soluble at $x_t$}.
\end{defin}

%In \cite[Example ?]{ballester2023solubility} an example of cycle set that is not soluble was given. It corresponds to the indecomposable cycle set of size $3$. We conclude the section showing that it is not a special case, since indecomposable cycle sets having finite primitive level are never soluble.

\begin{prop}
    Let $X$ be an indecomposable cycle set having finite primitive level. Then, $X$ is not a soluble cycle set.
\end{prop}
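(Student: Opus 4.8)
The plan is to argue by contradiction, using the fixed-point obstruction recorded in \cref{ossfixed}: an indecomposable cycle set of finite primitive level admits no left multiplication with a fixed point. Hence it suffices to produce, from solubility, an element $x$ with $x\cdot x=x$, i.e. a fixed point of $\sigma_x$. Throughout we may assume $|X|>1$, since for $|X|=1$ there is nothing of substance to check (the solubility chain is then empty).

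First I would unwind \cref{soluble} at the \emph{top} of its chain. Solubility at $x_t$ provides subsets $X_t\subseteq\dots\subseteq X_0=X$ with $X_t=\{x_t\}$ and cycle set epimorphisms $f_i\colon X\to Y_i$; because $|X|>1$ we have $t\geq 1$, so the data for the index $i=t$ is available. Condition (1) for $i=t$ says that $X_t=\{x_t\}$ is an entire class of $Ker_{f_t}$, that is, the fiber $f_t^{-1}(f_t(x_t))$ equals $\{x_t\}$. Condition (2) for $i=t$ says that $f_t(X_{t-1})$ is a sub-cycle set of $Y_t$ on which the induced operation is the trivial one $a\cdot b=b$.

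The key step is to combine these two facts. Since $x_t\in X_t\subseteq X_{t-1}$, its image $f_t(x_t)$ lies in $f_t(X_{t-1})$, and condition (2) gives $f_t(x_t)\cdot f_t(x_t)=f_t(x_t)$. As $f_t$ is a cycle set homomorphism, $f_t(x_t\cdot x_t)=f_t(x_t)\cdot f_t(x_t)=f_t(x_t)$, so $x_t\cdot x_t$ lies in the same $f_t$-fiber as $x_t$. But that fiber is exactly the singleton $\{x_t\}$, whence $x_t\cdot x_t=x_t$, i.e. $\sigma_{x_t}(x_t)=x_t$. This exhibits a fixed point of $\sigma_{x_t}$, contradicting \cref{ossfixed} since $X$ has finite primitive level, and completes the argument.

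I expect the only genuine subtlety—rather than any computation—to be the correct reading of \cref{soluble}. One must invoke the conditions at the very top of the chain ($i=t$), where the decreasing subsets have collapsed to the singleton $\{x_t\}$, and recognise that "$X_t$ is a fiber of $f_t$" together with "the operation on $f_t(X_{t-1})$ is trivial" forces the squaring $x_t\cdot x_t$ back into that singleton. Everything else is bookkeeping; in particular the edge cases $|X|=1$ and the degenerate reading $t=0$ should be dispatched at the outset, as there no conditions are imposed and the statement is vacuous.
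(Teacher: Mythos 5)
Your proof is correct, and it reaches the contradiction through the same obstruction as the paper (\cref{ossfixed}), but by a genuinely leaner route. The paper's proof invokes \cite[Lemma 1]{cacsp2018} --- the fibers of a cycle set epimorphism out of an indecomposable cycle set all have the same cardinality --- to conclude from $X_t=\{x_t\}\in X/Ker_{f_t}$ that $f_t$ is bijective, hence $X\cong Y_t$; it then reads off from condition 2) of \cref{soluble} an identity $x\cdotp y=y$ in $Y_t\cong X$, which contradicts \cref{ossfixed}. You never need that constant-fiber lemma: you apply condition 2) only at the single image point $f_t(x_t)$ to get $f_t(x_t)\cdotp f_t(x_t)=f_t(x_t)$, use the homomorphism property to place $x_t\cdotp x_t$ in the fiber over $f_t(x_t)$, and use the singleton fiber from condition 1) to force $x_t\cdotp x_t=x_t$, a fixed point of $\sigma_{x_t}$. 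This localizes the contradiction at $x_t$ itself and makes the argument self-contained, with indecomposability entering only through \cref{ossfixed}, exactly where the paper also needs it; the paper's route buys the slightly stronger intermediate fact $X\cong Y_t$, but that extra information is not needed for the proposition. Your handling of the degenerate cases ($|X|=1$, equivalently $t=0$) is also sound, since the chain conditions force $|X|=1$ whenever $t=0$, and such an $X$ is not of finite primitive level under the paper's conventions.
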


\begin{proof}
    Suppose that $X$ is soluble and let $X_0,...,X_t$ and $f_1,...,f_t$ as in \cref{soluble}. Then, we have that $X_t\in X/Ker_{f_t}$, and by \cite[Lemma 1]{cacsp2018} we have that $f_t$ is bijective, therefore $X\cong Y_t$. By 2) of \cref{soluble}, there exist $x,y\in Y_t$ such that $x\cdotp y=y$, but this contradicts \cref{ossfixed}.
\end{proof}

\section{Cycle sets of primitive level $2$}

In this section, we focus on cycle sets having primitive level $2$. In particular, following \cite[Question 3.2]{cedo2021constructing}, we provide a description of all the indecomposable cycle sets having primitive level $2$ by means of their permutation left braces.

\smallskip

At first, we start by an easy case, considering cycle sets with trivial permutation left brace.

\begin{prop}
    Let $X$ be an indecomposable cycle set with trivial permutation left brace $\mathcal{G}(X)$. Then, $fpl(X)=2$ if and only if $X$ has size $pq$, where $p$ and $q$ are two prime numbers not necessarily distinct.
\end{prop}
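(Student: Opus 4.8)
The plan is to reduce the statement to the computation of the primitive level for cycle sets with abelian permutation group recorded in 1) of \cref{esfin}. The first and only essential observation is that triviality of the permutation left brace is a genuinely stronger hypothesis than abelianness of $\mathcal{G}(X)$, and in particular implies it: by definition of a trivial left brace one has $a\circ b = a+b$ for all $a,b$, so the multiplicative group $(\mathcal{G}(X),\circ)$ literally coincides with the additive group $(\mathcal{G}(X),+)$, which is abelian. Hence $\mathcal{G}(X)$ is abelian as a group and 1) of \cref{esfin} applies directly.

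Writing $|X| = p_1^{\alpha_1}\cdots p_n^{\alpha_n}$ for the prime factorization, 1) of \cref{esfin} then yields $fpl(X) = \alpha_1 + \cdots + \alpha_n$. From this point the equivalence is purely arithmetic: the exponent sum equals $2$ exactly when either $n=1$ and $\alpha_1 = 2$, giving $|X| = p^2$, or $n = 2$ and $\alpha_1 = \alpha_2 = 1$, giving $|X| = p_1 p_2$ with $p_1 \neq p_2$. In both cases $|X| = pq$ for two (not necessarily distinct) primes $p,q$, and conversely any size of this form has exponent sum $2$, so $fpl(X) = 2$.

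I expect no substantial obstacle; the only subtle point is that the hypothesis concerns triviality of the left brace rather than mere abelianness of $\mathcal{G}(X)$, the former being strictly stronger and exactly what makes the citation applicable. Should one prefer an argument not relying on the external Theorem 4.4 cited in \cref{esfin}, I would instead use \cref{cosmod}: triviality forces every $\lambda_x$ to be the identity, so each left multiplication acts as the single map $y\circ K \mapsto a_1^-\circ y\circ K$, whence $X$ is a trivial cycle set with cyclic $\mathcal{G}(X)\cong\mathbb{Z}/|X|\mathbb{Z}$. Its epimorphic images then correspond via \cref{epiid} to the subgroups of this cyclic group, so a maximal chain of images of strictly decreasing size terminating in a prime has length equal to the number of prime factors of $|X|$ counted with multiplicity; this equals $2$ precisely when $|X| = pq$.
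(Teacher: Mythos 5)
Your proof is correct, but your primary route is genuinely different from the paper's. The paper gives a short, self-contained argument: triviality of the permutation left brace forces $X$ itself to be a trivial cycle set, so every epimorphic image of $X$ is again a trivial indecomposable cycle set, and such a $Y$ is an epimorphic image of $X$ precisely when $|Y|$ divides $|X|$; hence $fpl(X)$ equals the number of prime factors of $|X|$ counted with multiplicity, which is $2$ exactly when $|X|=pq$. You instead reduce to 1) of \cref{esfin}, i.e.\ to the external theorem for abelian $\mathcal{G}(X)$ (Theorem 4.4 of \cite{castelli2022simplicityarx}), after the correct observation that a trivial brace has $(\mathcal{G}(X),\circ)=(\mathcal{G}(X),+)$ abelian. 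This is sound, shorter on the page, and actually establishes a stronger statement --- the conclusion already holds under the weaker hypothesis that $\mathcal{G}(X)$ is merely abelian --- but it outsources all the work to the cited external result, whereas the paper's argument is elementary and internal to the setting at hand. Your fallback sketch via \cref{cosmod} is essentially the paper's own proof; note only that to claim the epimorphic images ``correspond to the subgroups of the cyclic group'' you need \cref{fattepi} in addition to \cref{epiid} (to factor an arbitrary epimorphism as an ideal quotient followed by a covering), together with the observation that a covering between trivial indecomposable cycle sets is an isomorphism, since $|\mathcal{G}(Y)|=|Y|$ for such cycle sets; this is a routine detail, not a gap.
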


\begin{proof}
    Since $\mathcal{G}(X)$ is a trivial left brace, it follows that $X$ is a trivial cycle set and any epimorphic image of $X$ is a trivial cycle set. Then, the thesis follows by the fact that if $Y$ is a trivial indecomposable cycle set and the size of $Y$ divides the size of $X$, then $Y$ is an epimorphic image of $X$. 
\end{proof}

By the previous proposition, we can focus on 
indecomposable cycle set of primitive level $2$ provided by non-trivial left braces. We start by some preliminary results. 

\begin{prop}\label{cardtrivdiv}
    Let $X$ be an indecomposable cycle set and $p:X\longrightarrow Y$ an epimorphism from $X$ to a trivial indecomposable cycle set. Then, the size of $Y$ divides $X/\mathcal{G}(X)^2$.
\end{prop}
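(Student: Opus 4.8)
The plan is to show that the epimorphism $p$ factors through the quotient $X/\mathcal{G}(X)^2$ and then to read off the divisibility from the cyclic groups attached to trivial indecomposable cycle sets. Throughout I read the conclusion as: $|Y|$ divides $|X/\mathcal{G}(X)^2|$, where $X/\mathcal{G}(X)^2$ is the quotient cycle set of \cref{epiid} (recall that $\mathcal{G}(X)^2=Dis(X)$ is an ideal by \cref{preldis}).

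First I would record two facts about the target. Since $Y$ is a trivial cycle set, all its left multiplications coincide, say $\sigma_{y}=\alpha$ for every $y\in Y$; moreover $Y$ is indecomposable, so $\alpha$ is a single $|Y|$-cycle and $\mathcal{G}(Y)=\langle\alpha\rangle\cong\mathbb{Z}/|Y|\mathbb{Z}$. Being a homomorphism of cycle sets, $p$ satisfies $p(\sigma_x(z))=\sigma_{p(x)}(p(z))=\alpha(p(z))$ for all $x,z\in X$, and the crucial point is that the right-hand side does not depend on $x$.

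Next I would prove that $\mathcal{G}(X)^2\subseteq I(p)$, where $I(p)$ is the ideal associated to $p$ by \cref{epiid}. By \cref{preldis} the ideal $\mathcal{G}(X)^2$ equals $Dis(X)$, so it is generated (as a subgroup of $\mathcal{G}(X)$ under composition) by the elements $\sigma_a\sigma_b^{-1}$. For such a generator and any $z\in X$, writing $w:=\sigma_b^{-1}(z)$ and using the displayed identity twice gives $p(\sigma_a\sigma_b^{-1}(z))=p(\sigma_a(w))=\alpha(p(w))=p(\sigma_b(w))=p(z)$. Hence every generator lies in $I(p)=\{g\in\mathcal{G}(X)\mid p(g(x))=p(x)\ \forall x\}$, and since $I(p)$ is a subgroup we obtain $Dis(X)=\mathcal{G}(X)^2\subseteq I(p)$. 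Because of this inclusion, $p$ is constant on the $\mathcal{G}(X)^2$-orbits, so it factors as $p=\bar p\circ \pi$, where $\pi:X\to X/\mathcal{G}(X)^2$ is the canonical epimorphism and $\bar p:X/\mathcal{G}(X)^2\to Y$ is a cycle set epimorphism.

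Finally I would assemble the conclusion. The quotient $X/\mathcal{G}(X)^2=X/Dis(X)$ is an indecomposable epimorphic image of $X$ on which all left multiplications induce the same permutation (two of them differ by an element of $Dis(X)$, which fixes every $Dis(X)$-orbit), hence it is trivial; therefore $\mathcal{G}(X/\mathcal{G}(X)^2)\cong\mathbb{Z}/|X/\mathcal{G}(X)^2|\mathbb{Z}$. Applying to $\bar p$ the induced surjective left brace homomorphism $\mathcal{G}(X/\mathcal{G}(X)^2)\to\mathcal{G}(Y)$, I obtain a surjective group homomorphism $\mathbb{Z}/|X/\mathcal{G}(X)^2|\mathbb{Z}\to\mathbb{Z}/|Y|\mathbb{Z}$, which forces $|Y|$ to divide $|X/\mathcal{G}(X)^2|$, as claimed. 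I expect the only genuinely delicate step to be the factorization, i.e.\ the inclusion $\mathcal{G}(X)^2\subseteq I(p)$: everything hinges on the observation that triviality of $Y$ makes $p\circ\sigma_x$ independent of $x$. Once this is in place, the reduction to a surjection of cyclic groups is routine.
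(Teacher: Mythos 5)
Your proof is correct, but it takes a different route from the paper's. The paper invokes \cref{dyn} to realize $X$ as a dynamical extension $Y\times_{\alpha}S$, observes that $\mathcal{G}(X)^2$ preserves each fiber $\{y\}\times S$, and then uses a conjugation/normality argument to show that $\mathcal{G}(X)^2$ splits every fiber into the \emph{same} number $r$ of orbits, yielding the exact count $|X/\mathcal{G}(X)^2|=r\cdot|Y|$. You instead avoid dynamical extensions altogether: you prove the inclusion $Dis(X)=\mathcal{G}(X)^2\subseteq I(p)$ (the ideal of \cref{epiid}), factor $p$ as $\bar p\circ\pi$ through the canonical quotient $X/\mathcal{G}(X)^2$, note that this quotient is itself a trivial indecomposable cycle set, and conclude via the induced surjection of cyclic groups $\mathbb{Z}/|X/\mathcal{G}(X)^2|\mathbb{Z}\twoheadrightarrow\mathbb{Z}/|Y|\mathbb{Z}$. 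The two arguments share the same core observation --- triviality of $Y$ makes $\sigma_{p(x)}$ independent of $x$, so the displacements group preserves the fibers of $p$ --- but they extract the divisibility differently. The paper's fiberwise counting buys a sharper statement (the equality $|X/\mathcal{G}(X)^2|=r|Y|$, with $r$ uniform across fibers), which is mild extra information. Your factorization buys a cleaner conceptual fact: every epimorphism from $X$ onto a trivial indecomposable cycle set factors through $X/Dis(X)$, i.e.\ $X/Dis(X)$ is the universal such quotient; this also re-derives, in passing, the triviality of $X/Dis(X)$ that underlies the converse direction of \cref{carattfinprim}. One small remark: your final step does not really need the cyclic structure --- any surjective group homomorphism between finite groups forces the order of the target to divide the order of the source, and alternatively the equicardinality of fibers of $\bar p$ (via \cref{dyn} applied to $\bar p$) would finish just as quickly --- but the argument as written is sound.
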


\begin{proof}
    By \cref{dyn}, $X$ is isomorphic to a dynamical extension $Y\times_{\alpha} S$. Moreover, $\mathcal{G}(X)^2$ fixes every set $\{y\}\times S$, for all $y\in Y$, and by a standard calculation we have that $x_1$ and $x_2$ are in the same orbit respect to $\mathcal{G}(X)^2 $ if an only if $g(x_1)$ and $g(x_2)$ are in the same orbit respect to $\mathcal{G}(X)^2 $, for all $x_1,x_2\in X$ and $g\in \mathcal{G}(X)$. Therefore, there exist a natural number $r$ such that $\mathcal{G}(X)^2 $ splits every set $\{y\}\times S$ into $r$ orbits. Hemce, it follows that $|X/\mathcal{G}(X)^2 |=r\cdotp |Y|$.
\end{proof}

\begin{cor}\label{cardtriv}
    Let $X$ be a non-trivial indecomposable cycle set having primitive level $2$. Then, the action of $\mathcal{G}(X)^2$ on $X$ splits $X$ into $p$ orbits, for a prime number $p$, and if $r:X\longrightarrow Y$ is an epimorphism with $|Y|$ a prime number, then $|Y|=p$.
\end{cor}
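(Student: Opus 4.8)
The plan is to combine the characterization of finite primitive level from \cref{carattfinprim} with the cardinality-divisibility statement in \cref{cardtrivdiv}. Since $X$ has primitive level $2$, it is in particular of finite primitive level, so by \cref{carattfinprim} the displacements group $Dis(X) = \mathcal{G}(X)^2$ (the equality being \cref{preldis}) does not act transitively on $X$; hence the action of $\mathcal{G}(X)^2$ on $X$ has $m > 1$ orbits. My first task is to show that this number $m$ of orbits is a prime number.

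To do this I would argue that if $m$ were not prime, or more precisely if the orbit structure permitted a proper refinement, then $X$ would admit a chain of epimorphisms longer than a single step before reaching a primitive cycle set, contradicting $fpl(X) = 2$. Concretely, the $m$ orbits of $\mathcal{G}(X)^2$ form an imprimitive block system, and by the proof technique of \cref{carattfinprim} the quotient $(\Delta, \cdot)$ on these $m$ orbits is a trivial indecomposable cycle set of size $m$ that is an epimorphic image of $X$. Since $fpl(X) = 2$, the longest admissible chain $X = X_1 \to X_2 \to \cdots \to X_k$ with strictly decreasing sizes ending at a primitive cycle set has length exactly $2$; so the quotient onto $(\Delta,\cdot)$, being an epimorphic image of size $>1$, must already be primitive, which forces $m$ to be prime (a trivial indecomposable cycle set is primitive precisely when its size is prime, by the main theorem of \cite{cedo2020primitive} cited after the definition of primitivity). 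This gives the first assertion with $p := m$.

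For the second assertion, suppose $r : X \to Y$ is an epimorphism onto a cycle set $Y$ of prime size $|Y| = q$. An indecomposable cycle set of prime size is primitive (again by \cite{cedo2020primitive}), and moreover by \cite[Theorem 2.13]{etingof1998set} it is trivial, so $Y$ is a trivial indecomposable cycle set. Then \cref{cardtrivdiv} applies directly and tells me that $q = |Y|$ divides $|X/\mathcal{G}(X)^2|$. But $|X/\mathcal{G}(X)^2|$ is exactly the number of $\mathcal{G}(X)^2$-orbits on $X$, which we have just shown equals the prime $p$. Since a prime $q$ divides the prime $p$ only when $q = p$, I conclude $|Y| = p$, as desired.

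The main obstacle I anticipate is the first step: rigorously pinning down that the number of $\mathcal{G}(X)^2$-orbits is itself prime, rather than merely having a prime as an epimorphic-image size. The subtlety is ensuring that the quotient onto the $\mathcal{G}(X)^2$-orbit system is genuinely \emph{primitive} and not just trivial-but-further-reducible; this is where the hypothesis $fpl(X) = 2$ must be used with care, since $fpl$ is defined via the \emph{longest} chain of proper epimorphisms terminating at a primitive cycle set. I would need to verify that no intermediate cycle set can be inserted between $X$ and $(\Delta,\cdot)$ without forcing a chain of length $\geq 3$, which amounts to checking that $(\Delta,\cdot)$ being non-primitive would yield $fpl(X) \geq 3$, contradicting the hypothesis. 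Once this is established the divisibility argument is immediate.
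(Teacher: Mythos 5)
Your proposal is correct and takes essentially the same route as the paper: the paper likewise deduces that $fpl(X)=2$ forces the number of $\mathcal{G}(X)^2$-orbits to be a prime $p$ (it merely asserts this step, which your chain argument $X\rightarrow \Delta\rightarrow \mathbb{Z}/q\mathbb{Z}$ justifies) and then applies \cref{cardtrivdiv} to conclude $|Y|=p$. The only detail worth making explicit is that the non-triviality of $X$ is what guarantees $|X|>|\Delta|$ (a trivial $X$ would have $Dis(X)=\{\id\}$ and all orbits singletons), so that your length-$3$ chain is admissible under the strict-inequality requirement in the definition of primitive level.
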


\begin{proof}
   Since $X$ has primitive level $2$, necessarily the action of $\mathcal{G}(X)^2$ on $X$ splits $X$ into $p$ orbits, for a prime number $p$. Since $|Y|$ is a prime number, the thesis follows by the previous corollary.
\end{proof}

Now, we are ready for the desired description.

\begin{theor}\label{liv2}
    Let $(B,+,\circ)$ be a non-trivial left brace, $Y\subset B$ a transitive cycle base, $a_1\in Y$ and $K$ a core-free subgroup contained in the stabilizer $B_{a_1}$ of $a_1$ respect to the action $\lambda$.
    Moreover, let $x\circ K$ be an arbitrary left coset of $B/K$ and $B_{x\circ K}$ be the stabilizer of $x\circ K$ in $B$ respect to the left multiplication in $(B,\circ)$.
    Then, the cycle set $C_{B,K,a_1}$ has primitive level $2$
    % with $r:C_{B,K,a_1}\longrightarrow C_{B,K,a_1}/I(r)$ as unique epimorphism 
    if and only if the following conditions hold:
    \begin{itemize}
        \item[1)] the index of the subgroup $B^2\circ B_{x\circ K} $ of $B$ is a prime number $p$;        
        %\item[1)] the action of $B^2$ on the left coset $B/K$ by left multiplication has $p$ orbits, for a prime number $p$;
        \item[2)] the action of $B^2$ on the left coset $B/H$ by left multiplication is transitive, for every core-free subgroup $H$ with $K<H\leq B_{a_1}$;
        \item[3)] if $J$ is an ideal such that its action on 
        the left coset $B/K$ by left multiplication has $o_J$ orbits, with $o_J>p$, then $B^2$ acts transitively (by the induced action) on the $J-$orbits of $B/K$.
        %\item[3)] for every non-trivial ideal $J$ contained in $I(r)$, we have that $I(r)=I(p_J)$.
        %\item[3)] for every ideal $I$ containing $B^2$ that does not acts transitively on the left coset $B/K$ by left multiplication, if $J\leq B^2$ then $J=0$ or $J=B^2$.
    \end{itemize}
Moreover, every non-trivial indecomposable cycle set $X$ having primitive level $2$ can be constructed as $C_{B,K,a_1}$ for suitable $B,K$ and $a_1$ satisfying the previous conditions.
\end{theor}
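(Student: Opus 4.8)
The plan is to use the characterization of finite primitive level from \cref{carattfinprim} together with the structural correspondence of \cref{cosmod} and \cref{cardtriv}, translating everything into statements about the action of $B^2 = Dis(X)$ (by \cref{preldis}) on cosets of $B$. First I would fix an indecomposable cycle set $X$ of primitive level $2$; by \cref{cosmod} we may write $X = C_{B,K,a_1}$ with $B = \mathcal{G}(X)$, and by \cref{preldis} we have $Dis(X) = B^2$. The condition $fpl(X) = 2$ unpacks into two halves: (i) $X$ \emph{has} finite primitive level, so by \cref{carattfinprim} the group $B^2$ does \emph{not} act transitively on $X = B/K$, and by \cref{cardtriv} its orbits number exactly a prime $p$; and (ii) $X$ does \emph{not} have primitive level $\geq 3$, i.e. there is no chain $X \to X' \to Y$ of proper epimorphisms ending in a primitive (prime-size) cycle set. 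The strategy is to show that conditions (1)--(3) are precisely the coset-theoretic encodings of (i) and (ii).

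For condition (1), I would identify the number of $B^2$-orbits on $B/K$ with the index of $B^2 \circ B_{x\circ K}$ in $B$. Since $B^2$ is normal (being an ideal), $B^2 \circ B_{x\circ K}$ is a subgroup, and the $B^2$-orbits on $B/K$ are in bijection with the double cosets $B^2 \backslash B / B_{x\circ K}$; because $B$ acts transitively on $B/K$ these collapse to the single index $[B : B^2 \circ B_{x\circ K}]$. So condition (1) saying this index is a prime $p$ is exactly the assertion from \cref{cardtriv} that $B^2$ splits $X$ into $p$ orbits, giving $fpl(X) \geq 2$ via the epimorphism onto the trivial cycle set of size $p$ built in the proof of \cref{carattfinprim}. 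This is the most routine direction and amounts to bookkeeping with double cosets.

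Conditions (2) and (3) together must rule out $fpl(X) \geq 3$, and this is where the real work lies. By \cref{fattepi}, any epimorphism from $X$ factors as a covering composed with a quotient $p_I$ for an ideal $I$ of $B$; and by \cref{coveringcostr} coverings correspond exactly to passing from $K$ to a larger core-free subgroup $H \leq B_{a_1}$. So an intermediate cycle set $X'$ with $X \to X' \to Y$ ($Y$ primitive) arises either from enlarging $K$ to some $H$ (a covering) or from quotienting by an ideal $I$. Condition (2) handles the covering case: if $K < H \leq B_{a_1}$ with $H$ core-free, then $C_{B,H,a_1}$ is a proper epimorphic image with the \emph{same} permutation brace $B$, and demanding $B^2$ act transitively on $B/H$ forces (by \cref{carattfinprim}) this image to have \emph{infinite} primitive level, so it cannot sit in a length-$3$ chain descending to a primitive cycle set. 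Condition (3) handles the ideal-quotient case: if $J$ is an ideal giving strictly more than $p$ orbits on $B/K$, then $X/J$ is a larger epimorphic image which would itself need finite primitive level $\geq 2$ to produce a length-$3$ chain; requiring $B^2$ to act transitively on the $J$-orbits forces $fpl(X/J) = 1$ (prime/primitive), so no chain of length $3$ can be threaded through it.

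The main obstacle will be verifying completeness in the converse direction, i.e. that conditions (1)--(3) exhaust all possible ways a chain of length $3$ could arise. Concretely, after invoking \cref{fattepi} to reduce an arbitrary epimorphism to a covering followed by a quotient $p_I$, I must check that every primitive terminal cycle set $Y$ in a hypothetical length-$3$ chain forces either an intermediate $H$ as in (2) or an ideal $J$ as in (3), with no mixed case slipping through; the careful point is that the prime $p$ of condition (1) is forced by \cref{cardtriv} to be the unique prime size of any primitive image, so any $J$ producing more than $p$ orbits genuinely corresponds to a \emph{longer} descending chain and must be killed by the transitivity requirement. I would organize the proof so that the forward implication ($fpl(X)=2 \Rightarrow$ (1)--(3)) proceeds by contraposition on each condition — negating (1), (2), or (3) in turn and exhibiting either infinite primitive level or a length-$3$ chain — and the converse assembles (1)--(3) to pin $fpl(X)$ to exactly $2$. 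The final sentence of the statement, that every such $X$ arises as some $C_{B,K,a_1}$, is immediate from the converse half of \cref{cosmod}.
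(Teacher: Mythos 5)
Your proposal is correct and follows essentially the same route as the paper's proof: condition 1) by counting the orbits of the normal subgroup $B^2$ on $B/K$, condition 2) against coverings via \cref{coveringcostr}, condition 3) against ideal quotients, and the converse assembled through the factorization of \cref{fattepi}, with \cref{cardtrivdiv} forcing $p$ to be the only possible prime size of a primitive image; the mixed case you flag as the main obstacle is resolved exactly as you suspect, by replacing an intermediate $T$ whose epimorphism from $X$ is not a covering with the quotient $X/J$, where $J=I(r_1)$ is non-trivial, since the prime-size image remains an epimorphic image of $X/J$ and then condition 3) applies. One local slip: transitivity of $B^2$ on the $J$-orbits does not force $fpl(X/J)=1$ (a primitive cycle set has prime size, hence it is trivial and its displacement group is not transitive); rather, by \cref{carattfinprim} it forces $X/J$ to have no finite primitive level at all, which by \cref{crit} still yields your intended conclusion that no chain of length $3$ can pass through $X/J$.
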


\begin{proof}
    Suppose that $C_{B,K,a_1}$ has primitive level $2$. Then, $C_{B,K,a_1}/B^2$ is a non-trivial quotient of $C_{B,K,a_1}$ and the size of  $C_{B,K,a_1}/B^2$ is a prime number $p$.
    %By \cref{idea2} and \cref{cosmod}
    Since the action of $B^2$ on the cycle set $C_{B,K,a_1}$ is just the action by left multiplication on $B/K$, by \cite[Excercise 9 at p. 117]{dummitfoote} we have that condition 1) follows. If 2) does not hold, there exist a core-free subgroup $H$, with $K<H$, that give rise to a covering $p_1:C_{B,K,a_1}\longrightarrow C_{B,H,a_1} $ and an epimorphism $p_1:C_{B,H,a_1}\longrightarrow C_{B,H,a_1}/B^2 $ with $ |B/H| < |B/K| $ and $|C_{B,H,a_1}/B^2 |>1$, therefore by \cref{carattfinprim} $C_{B,H,a_1}$ has finite primitive level and hence $C_{B,K,a_1}$ has primitive level greater that $2$, a contradiction. If 3) does not hold for a suitable ideal $J$, we obtain an epimorphism $p_1:C_{B,K,a_1}\longrightarrow C_{B,K,a_1}/J $ and, if $J'$ is such that $\mathcal{G}(C_{B,K,a_1}/J)\cong B/J'$, we have that $(B/J')^2$ does not acts transitively on $C_{B,K,a_1}/J$. Therefore, by \cref{carattfinprim} $C_{B,K,a_1}/J $ has finite primitive level and hence $C_{B,K,a_1} $ has primitive level greater than $2$, a contradiction.\\
    Conversely, suppose that 1), 2) and 3) hold.  By condition 1) and \cite[Excercise 9 at p. 117]{dummitfoote}, the indecomposable cycle set $Z:=C_{B,K,a_1}/B^2$ has prime size $p$. Moreover, there is a natural epimorphism $r$ from $C_{B,K,a_1}$ to $Z$.
    %By condition 1), there exist a prime number $p$, an indecomposable cycle set $Z$ having $p$ elements and an epimorphism $r$ from $C_{B,K,a_1}$ to $Z$. 
    By \cref{cardtrivdiv} and \cite[Theorem 2.13]{etingof1998set}, $p$ and $Z$ are completely determined by condition 1) and there are not other trivial indecomposable cycle sets that are epimorphic images of $C_{B,K,a_1}$. Therefore, to show the thesis, it is sufficient proving that there is not a non-trivial indecomposable cycle set $T$, different from $C_{B,K,a_1}$ and $Z$, such that $T$ is an epimorphic image of $C_{B,K,a_1}$ and $Z$ is an epimorphic image of $T$. Suppose $T$ is such a cycle set and $r_1:C_{B,H,a_1}\longrightarrow T$ and $r_2:T\longrightarrow Z$ be epimorphisms. If $r_1$ is a covering, by \cref{coveringcostr} $T=B/H$ for some subgroup $H$ with $K<H\leq B_{a_1}$, and since $T$ has finite primitive level, $B^2$ does not acts transitively on the left cosets $B/H$, against condition 2). Then, by \cref{fattepi}, without loss of generality we can suppose that $T$ is an epimorphic image of the form $C_{B,H,a_1}/J$, for some non-trivial ideal $J$. Therefore $C_{B,H,a_1}/J$ is a non-trivial indecomposable cycle set of finite primitive level, with $|C_{B,H,a_1}/J|=o_j>p$ and this implies that $B^2$ does not acts transitively on the $J$-orbits of $B/K$, but this contradicts 3). \\   
 Finally, by \cref{cosmod} every non trivial indecomposable cycle set $X$ having primitive level $2$ can be constructed as $C_{B,K,a_1}$ for suitable $B,K$ and $a_1$ satisfying conditions 1), 2) and 3).
\end{proof}

%\begin{proof}
%    Suppose that $C_{B,K,a_1}$ has primitive level $2$. Then, $C_{B,K,a_1}/B^2$ is a non-trivial quotient of $C_{B,K,a_1}$. 
    %By \cref{idea2} and \cref{cosmod}
%    Since the action of $B^2$ on the cycle set $C_{B,K,a_1}$ is just the action by left multiplication on $B/K$, we have that condition 1) follows. If 2) does not hold, by condition 1) we have that $B^2$ splits $B/H$ into $p$ orbits, for a suitable prime number $p$ and a core-free subgroup $H$ with $K<H\leq B_{a_1}$. In this way, we obtain a covering $p_1:C_{B,K,a_1}\longrightarrow C_{B,H,a_1} $ and an epimorphism $p_1:C_{B,H,a_1}\longrightarrow C_{B,H,a_1}/B^2 $ with $|C_{B,H,a_1}/B^2 |=p$, hence $C_{B,K,a_1}$ has primitive level greater that $2$, a contradiction. Now, let $J$ be an ideal contained in $I(r)$. Then, $C_{B,K,a_1}/J$ must be a trivial cycle set of size $p$, and by \ref{} we have that $\mathcal{G}(C_{B,K,a_1}/J)\cong B/I(p_J)\cong \mathbb{Z}/p\mathbb{Z}$, hence $I(r)=I(p_J)$.\\
%    Conversely,
%\end{proof}

We conclude the section applying \cref{liv2} to construct a family of indecomposable cycle set having primitive level $2$.

\begin{ex}
    Let $B_1$ be the left brace $B_{8,27}$ of \cite{Ve15pack} and $B_2$ the trivial left brace having $p$ elements, for a prime number $p$ different from $2$, and set $B$ the direct product of the left braces $B_1$ and $B_2$. Then, $B$ has a transitive cycle base $Y=Y_1\times\{y\}$, where $Y_1$ is a transitive cycle base of $B_1$, which has size $4$, and $y$ is a non-zero element of $B_2$. Moreover, every element $a$ of $Y_1$ is stabilized by a core-free subgroup $K_a'$ of $(B_1,\circ)$ having size $2$. Therefore, if we set $a_1\in Y$ and  $K:=K_{a_1}'\times \{0\}$, we obtain that $C_{B, K, a_1}$ is an indecomposable cycle set having size $4p$. Now, we show that it is of primitive level $2$. If $x\circ K$ is a left coset of $B/K$, we obtain that $B^2\circ B_{x\circ K} $ is equal to $B_1\times \{0\}$, which is a subgroup of $(B,\circ)$ of index $p$, therefore condition 1) of \cref{liv2} is satisfied. Since $(B_1,\circ)$ is the dihedral group of size $8$ and $(B_2,\circ)$ is cyclic of prime order $p\neq 2$, condition 2) of \cref{liv2} automatically follows. Finally, the ideals of $B$ different from $\{0\}$ and $B$ are: $B_1\times \{0\}$, $\{0\}\times B_2$, $B_1^{2}\times \{0\}$,  $B_1^{2}\times B_2$. We do not need to consider $B_1^{2}\times B_2$, since it acts transitively on $C_{B, K, a_1}$. The ideals $B_1\times \{0\}$ and $B_1^2\times \{0\}$ splits $C_{B, K, a_1}$ in the same way into $p$ orbits, hence the remaining case is the ideal $\{0\}\times B_2$. It splits $C_{B, K, a_1}$ into $4$ orbits and $B^2=B_1^2\times \{0\}$ acts transitively on these orbits, therefore condition 3) of \cref{liv2} also follows and hence $C_{B, K, a_1}$ has primitive level $2$.
\end{ex}

\section*{Acknowledgements}
The author thanks M. Bonatto for the discussion about latin cycle sets.

%\section*{References}
\bibliographystyle{elsart-num-sort}
\bibliography{Bibliography2}

\end{document}